\documentclass[12pt,a4paper]{article}

\usepackage[margin=1in]{geometry}
\usepackage[numbers,sort&compress]{natbib}
\usepackage{amssymb,amsthm,mathtools}
\usepackage{enumitem}
\usepackage[hidelinks]{hyperref}

\newtheorem{theorem}{Theorem}[section]
\newtheorem{proposition}[theorem]{Proposition}
\newtheorem{lemma}[theorem]{Lemma}

\newtheorem{remark}[theorem]{Remark}

\begin{document}

\pagestyle{myheadings}
\markright{First Modular Zhu Algebra in Characteristic 7}

\title{The First Modular Zhu Algebra of
\(L(\frac12,0)_F\) in Characteristic Seven}

\author{Haihua Huang\\
\small School of General Education, Guangzhou Civil Aviation College,\\
\small Guangzhou 510403, Guangdong, China\\
\small Corresponding author: Haihua Huang, \texttt{hoiwaa@163.com}\\
\small ORCID: 0009-0008-7337-3947}

\date{}

\maketitle

\begin{abstract}
Let \(F\) be an algebraically closed field of characteristic \(7\), and let
\(V=L(\frac12,0)_F\). For the degree-four singular vector
\(q=(L(-2)^2-2L(-4))\mathbf1\) in the universal vacuum vertex operator
algebra \(\bar V\), set \(V_q=\bar V/I(q)\). We prove that the natural
quotient map induces an isomorphism
\(A_1(V_q)\cong A_1(V)\cong F[T]/(T(T-4)(T-5))\cong F^3\).
Explicit \(O_1(V_q)\)-relations and four families of triangular PBW
straightening relations give the upper bound, while three one-dimensional
homogeneous subspaces give the matching lower bound.
\end{abstract}

\noindent\textbf{2020 Mathematics Subject Classification:}\\
Primary 17B69; Secondary 17B68, 17B50.

\section{Introduction}

Zhu's algebra \(A(V)\) encodes the top homogeneous subspace of an
admissible \(V\)-module as a module over an associative algebra
\citep{Zhu96}. This construction underlies the analysis of rationality for
Virasoro minimal models~\citep{Wang93}. Over the complex numbers, the
higher level algebras \(A_n(V)\), which act on the first \(n+1\)
homogeneous subspaces of an admissible module, were introduced
in~\citep{DLM98}; their finite-dimensional semisimplicity is also tied
to rationality. In particular, \(A_1(V)\) acts on the first two
homogeneous subspaces, whereas \(A_0(V)\) acts only on the top level.
Further module-theoretic developments of higher level Zhu algebras appear
in~\citep{BVY19}. The level-one Zhu algebra of the Virasoro vertex
operator algebra associated with a generalized Verma module over
\(\mathbb C\) was computed explicitly in~\citep{BVY20}. The present
problem instead concerns the modular algebra of a simple quotient in
characteristic \(7\) and its degree-four singular-vector quotient. Zhu's
theory over an arbitrary algebraically closed field was established
in~\citep{DR14}, and the corresponding modular
\(A_n(V)\)-theory was constructed in~\citep{Ren17}; related results for vertex
operator superalgebras appear in~\citep{DW22}. In positive characteristic,
binomial coefficients in the \(O_n(V)\)-relations may vanish, distinct
integral degrees may define the same \(L(0)\)-eigenvalue, and reduction
may introduce additional singular vectors. Ren's definition retains the
full family of circle-product relations
\(u\circ^s_{n,t}v\) with \(0\le s\le t\), together with the translation
relations~\citep{Ren17}. Modular calculations therefore use this full family of
\(O_n(V)\)-relations. The action on homogeneous subspaces gives a
representation-theoretic quotient of \(A_n(V)\); an independent spanning
argument is needed to determine the dimension of the algebra. At level
\(n=1\), the relations used to eliminate high Virasoro modes come from
Ren's full family of \(O_1(V)\)-relations rather than from the ordinary
Zhu relation alone.

For the Virasoro algebra, the role of singular vectors in highest weight
modules is classical~\citep{Kac90,IK11,Ast97}. Let \(F\) be an
algebraically closed field of characteristic \(7\), and set
\(V=L(\frac12,0)_F\). Here
\(\frac12=\frac1{16}=4\in F\), so two highest weights from the complex
theory coincide. It was shown in~\citep{DR16} that the irreducible
admissible \(V\)-modules have highest weights \(0\) and \(4\), and that
the universal vacuum vertex operator algebra
\(\bar V=\bar V(\frac12,0)_F\) contains the degree-four singular vector
\(q=(L(-2)^2-2L(-4))\mathbf1\). This vector vanishes in the simple
quotient \(V\). For characteristics different from \(2\) and \(7\), the
same simple Virasoro vertex operator algebra is rational and has three
inequivalent irreducible modules~\citep{DR16}; the coincidence of weights
and the additional degree-four relation make characteristic \(7\) the
exceptional case. Further work~\citep{DLR24} showed that, in
characteristic \(7\), the Virasoro subalgebra generated by the conformal
vector is properly contained in the full even fermionic vertex operator
algebra \(V(H_F,\mathbb Z+\frac12)^{\bar0}\). They calculated the ordinary
Zhu algebra of the full even algebra and classified its irreducible
modules. Accordingly, the present calculation concerns the higher Zhu
algebra of the simple Virasoro subalgebra \(V\).

The ordinary Zhu algebra provides the baseline
\(A_0(V)\cong F[T]/(T(T-4))\cong F\times F\). The standard Virasoro Zhu
calculation~\citep{Wang93}, used in the arbitrary-field framework
of~\citep{DR14}, shows that this algebra is generated by the class of
\(\omega\). The degree-four relation and the modules in~\citep{DR16} then
give the polynomial \(T(T-4)\) and separate its two roots. For \(A_1(V)\),
denote the two irreducible modules
by \(\mathcal L_0\) and \(\mathcal L_4\). The homogeneous subspaces
\(\mathcal L_0(0)\), \(\mathcal L_4(0)\), and \(\mathcal L_4(1)\) are
one-dimensional, and the zero mode of \(\omega\) acts on them with
eigenvalues \(0,4,5\), respectively. They yield a surjective algebra
homomorphism \(A_1(V)\to F^3\). Let \(I(q)\) be the graded ideal of
\(\bar V\) generated by \(q\), and set \(V_q=\bar V/I(q)\). We prove that
the natural quotient map \(\pi_V:V_q\to V\) induces an algebra isomorphism
\(A_1(\pi_V):A_1(V_q)\to A_1(V)\), and that
\(A_1(V)\cong F[T]/(T(T-4)(T-5))\cong F^3\). The upper bound is
established in \(A_1(V_q)\), which isolates the degree-four relation from
the remaining relations of the simple quotient. The relation \(q=0\)
eliminates pairs of \(L(-2)\)-factors, and the \(O_1(V_q)\)-relations
replace modes \(L(-m)\) with \(m\ge5\) by lower-index modes. Three
triangular families coming from the translation relation handle all
resulting monomials except \(L(-4)^kL(-2)\mathbf1\) with \(k\ge2\). A
fourth family, obtained from a circle-product relation, handles these
remaining monomials. Together they reduce all PBW classes to the span of
\(\mathbf1\), \(L(-2)\mathbf1\), and
\(L(-4)L(-2)\mathbf1\). This proves \(\dim_F A_1(V_q)\le3\); the induced
map \(A_1(V_q)\to A_1(V)\) and the three one-dimensional homogeneous
subspaces then give equality throughout.

Section~\ref{sec:prelim} recalls the Virasoro and modular \(A_n(V)\)
notation used below. Section~\ref{sec:main-proof} establishes the module
and PBW calculations and concludes with the main theorem.

\section{Preliminaries}
\label{sec:prelim}

\subsection{Virasoro vertex operator algebras and PBW notation}

Throughout the paper, \(F\) denotes an algebraically closed field of
characteristic \(7\). Since \(2\) and \(12\) are invertible in \(F\), the
Virasoro algebra of central charge \(c=\frac12=4\in F\) is well defined
over \(F\)~\citep{DR16}. Let \(\mathrm{Vir}_F\) denote the Virasoro algebra
over \(F\), with generators \(L(n)\) for \(n\in\mathbb Z\) and central
element \(C\), subject to
\[
[L(m),L(n)]
=(m-n)L(m+n)+\delta_{m+n,0}\frac{m^3-m}{12}C,\qquad [C,L(n)]=0.
\]

We use the conventions of~\citep{DR14} for vertex operator
algebras and their modules over an arbitrary field. For homogeneous
\(a\in V_r\), write \(\operatorname{wt}(a)=r\), and set
\(Y(a,z)=\sum_{n\in\mathbb Z}a_nz^{-n-1}\) and
\(Y(\omega,z)=\sum_{n\in\mathbb Z}L(n)z^{-n-2}\). Following the notation
of~\citep{DR16}, for \(c,h\in F\), let \(V(c,h)_F\) denote the
Virasoro Verma module of central charge \(c\) and highest weight \(h\),
with highest weight vector \(v_{c,h}\), and let \(L(c,h)_F\) denote its
unique irreducible graded quotient. For \(h=0\), set
\(
\bar V(c,0)_F
:=V(c,0)_F/U(\mathrm{Vir}_F)L(-1)v_{c,0},
\)
and denote the image of \(v_{c,0}\) by \(\mathbf1\). The PBW basis of
\(\bar V(c,0)_F\) consists of
\(L(-n_1)\cdots L(-n_r)\mathbf1\), where
\(n_1\ge\cdots\ge n_r\ge2\), with \(r=0\) corresponding to
\(\mathbf1\).

In particular, \(\bar V=\bar V(\frac12,0)_F\) is the universal vacuum
Virasoro vertex operator algebra of central charge \(\frac12\).
Let \(J\) be its unique maximal proper graded ideal. The object studied
in this paper is the simple quotient
\(V=L(\frac12,0)_F=\bar V/J\).

An admissible \(V\)-module is a graded weak \(V\)-module
\(M=\bigoplus_{r\ge0}M(r)\). For every homogeneous \(a\in V\), every
\(j\in\mathbb Z\), and every \(r\ge0\), the grading satisfies
\(a_jM(r)\subseteq M(r+\operatorname{wt}(a)-j-1)\),
where \(M(k)=0\) for \(k<0\)~\citep{DR14}. We call \(M(r)\) the degree-\(r\)
homogeneous subspace of \(M\). Here both \(r\) and
\(\operatorname{wt}(a)\) always refer to the integral grading.

For \(m_i\ge2\), let
\(Q=L(-m_1)\cdots L(-m_r)\mathbf1\) be a vacuum word; when \(r=0\),
this means \(Q=\mathbf1\). If \(m_1\ge\cdots\ge m_r\), then \(Q\) is
called a vacuum PBW monomial. Rewriting an arbitrary vacuum word as a
linear combination of vacuum PBW monomials by means of the Virasoro
commutation relations will be called putting the word into PBW normal
form. Let \(|Q|:=m_1+\cdots+m_r\) be the integer total degree and
\(\operatorname{len}(Q):=r\) the PBW length. Every PBW monomial
occurring in the PBW normal form of \(Q\) has length at most
\(\operatorname{len}(Q)\).

\subsection{Modular higher level Zhu algebras}

The modular higher level Zhu algebra theory over an arbitrary
algebraically closed field was developed in~\citep{Ren17}. Fix
\(n\in\mathbb Z_{\ge0}\). For homogeneous \(u\in V\), arbitrary
\(v\in V\), and nonnegative integers \(s,t\) satisfying \(0\le s\le t\),
define
\[
u\circ^s_{n,t}v
:=
\operatorname{Res}_z\,Y(u,z)v
\frac{(1+z)^{\operatorname{wt}(u)+n+s}}{z^{2n+2+t}}.
\]

Let \(O_n(V)\) be the linear span of all vectors \(u\circ^s_{n,t}v\) as
above and all vectors \((L(-1)+L(0))\xi\), where \(\xi\) is homogeneous,
and set \(A_n(V):=V/O_n(V)\). Define
\[
u*_n v
:=
\sum_{m=0}^{n}(-1)^m\binom{m+n}{n}
\operatorname{Res}_z\,Y(u,z)v\,
\frac{(1+z)^{\operatorname{wt}(u)+n}}{z^{n+m+1}}.
\]
Both operations are extended linearly to nonhomogeneous vectors by
homogeneous decomposition. For \(a\in V\), write
\([a]_n:=a+O_n(V)\). If \(a\) is homogeneous, define its zero mode by
\(o(a):=a_{\operatorname{wt}(a)-1}\), and extend \(o\) linearly to \(V\)
by homogeneous decomposition.

\begin{remark}
The displayed definition in~\citep{Ren17} states
\(s,t\in\mathbb Z\) with \(s\le t\), whereas the discussion of
\(A_0(V)=A(V)\) and the proof that \(O_n(V)\) acts trivially on
\(\Omega_n(M)\) use \(t\ge s\ge0\). The proof that \(O_n(V)\) is stable
under the product \(*_n\) likewise begins with generators satisfying
\(t\ge s\ge0\). Thus the proofs of the associative algebra structure and
the action on \(\Omega_n(M)\) establish these results for the latter
range, which we adopt throughout. This range is also required for
compatibility with \(A_0(V)=A(V)\): if \(s=t=-1\) were allowed, then for
every \(v\in V\), one would have
\(\mathbf1\circ^{-1}_{0,-1}v
=\operatorname{Res}_z v\,(1+z)^{-1}/z=v\),
which would give \(O_0(V)=V\).
\end{remark}

The following result supplies the associative algebra structure and the
actions on low homogeneous subspaces that will be used throughout the
paper.

\begin{theorem}
\label{thm:modular-An-well-defined}
\citep{Ren17}
The product \(*_n\) induces an associative algebra structure on
\(A_n(V)\), with identity \([\mathbf1]_n\) and central element
\([\omega]_n\). If \(M\) is a weak \(V\)-module, then
\[
\Omega_n(M)=
\{\eta\in M\mid
u_i\eta=0\text{ for all homogeneous }u\in V
\text{ and all }i\ge\operatorname{wt}(u)+n\}
\]
is an \(A_n(V)\)-module under the action \([a]_n\mapsto o(a)\).
In particular, if \(M=\bigoplus_{r\ge0}M(r)\) is an admissible module,
then, for \(0\le r\le n\), \(M(r)\) is an \(A_n(V)\)-submodule of
\(\Omega_n(M)\).
\end{theorem}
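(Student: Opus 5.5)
This theorem is quoted from \citep{Ren17}, so the plan is to re-derive it in the modular setting, using only identities that hold over any field. The first step is to show that \(O_n(V)\) is a two-sided ideal for \(*_n\), i.e.\ \(u*_n O_n(V)\subseteq O_n(V)\) and \(O_n(V)*_n u\subseteq O_n(V)\), and that \(*_n\) is associative modulo \(O_n(V)\). The tools are the Jacobi identity in residue form, together with the iterate formula
\[
\operatorname{Res}_z Y(u,z)\,Y(v,w)\,\ldots
\]
written as a \(\operatorname{Res}_{z-w}\) expansion. These manipulations involve only integer binomial coefficients \(\binom{a}{k}\) with \(a\in\mathbb Z\), which reduce mod \(p\) without denominators. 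The argument therefore transfers from the complex case of \citep{DLM98}, with one exception. Over \(\mathbb C\), the relations for \(t>0\) or \(s>0\) follow from the \(s=t=0\) relation together with \(L(-1)+L(0)\). In characteristic \(p\) that deduction divides by integers, which is exactly why the full family \(u\circ^s_{n,t}v\) with \(0\le s\le t\) is kept in \(O_n(V)\).

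Concretely, I will verify three points in order. (i) \(O_n(V)\) is closed under left and right \(*_n\)-multiplication by homogeneous \(a\). Expand \(a*_n(u\circ^s_{n,t}v)\) via associativity of vertex operators, and show that each resulting term has the form \(u'\circ^{s'}_{n,t'}v'\) with \(t'\ge s'\ge0\), or lies in the span of the translation relations. The point is that multiplying \((1+z)^{\mathrm{wt}+n+s}/z^{2n+2+t}\) by powers of \((1+z)\) or \(z^{-1}\) stays inside the family. (ii) Associativity and the identity \([\mathbf 1]_n\): the latter follows from \(Y(\mathbf1,z)=\mathrm{id}\) and the fact that \(\sum_m(-1)^m\binom{m+n}{n}\operatorname{Res}_z z^{-n-m-1}(1+z)^{n}\) collapses to \(1\). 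For centrality of \([\omega]_n\), show \(\omega*_n a-a*_n\omega\equiv0\) using skew-symmetry \(Y(a,z)\omega=e^{zL(-1)}Y(\omega,-z)a\) and the translation relations.

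For the module statement, let \(\eta\in\Omega_n(M)\). I would show \(o(u\circ^s_{n,t}v)\eta=0\) by expanding \(o\) of a residue via the Jacobi identity into sums of \(u_iv_j\eta\) and \(v_ju_i\eta\). The factor \(z^{-2n-2-t}\) forces every surviving term to contain an annihilating mode \(u_i\) with \(i\ge\mathrm{wt}(u)+n\) acting on \(\eta\), or acting after a mode that has not raised the degree beyond \(n\). This step uses \(s\ge0\): the exponent \(\mathrm{wt}(u)+n+s\) must be nonnegative so that \((1+z)\) expands polynomially. For the translation relation, \(o((L(-1)+L(0))\xi)=0\) holds identically, since \((L(-1)\xi)_{\mathrm{wt}\xi}=-\mathrm{wt}(\xi)\,\xi_{\mathrm{wt}\xi-1}\). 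The homomorphism property \(o(a*_nb)\eta=o(a)o(b)\eta\) follows by the same expansion. The alternating coefficients \((-1)^m\binom{m+n}{n}\) are designed to cancel the degree-raising contributions on the first \(n+1\) levels. For admissible \(M\), each \(M(r)\) with \(r\le n\) lies in \(\Omega_n(M)\) by the grading condition \(u_iM(r)\subseteq M(r+\mathrm{wt}u-i-1)\), which is \(0\) for \(i\ge\mathrm{wt}u+n\ge \mathrm{wt}u+r\). Since \(o(a)\) preserves degrees, \(M(r)\) is a submodule.

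The main obstacle is step (i), the ideal property, in characteristic \(p\). The complex proof reduces general \(s,t\) to \(s=t=0\) using \(L(-1)\)-derivatives, and that reduction introduces factors like \(1/k!\). I would avoid it by working directly with the generating family. Show that \(\operatorname{Res}_z Y(a,z)\,(u\circ^s_{n,t}v)\,f(z)\), with \(f(z)=(1+z)^{\mathrm{wt} a+n}z^{-n-m-1}\), rewrites via the commutator formula as an integral combination of \(u\circ^{s'}_{n,t'}(\cdot)\) with \(t'\ge s'\ge0\), keeping track that all coefficients are integer binomials.
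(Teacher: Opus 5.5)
The paper does not prove this theorem. It quotes it from \citep{Ren17}, and its only addition is the Remark that Ren's arguments are carried out for generators with \(t\ge s\ge0\). As a reconstruction, your outline gets several pieces right: \(\mathbf1*_nv=v\) exactly, \(o((L(-1)+L(0))\xi)=0\), the alternating-binomial cancellation behind \(o(a*_nb)=o(a)o(b)\) on \(\Omega_n(M)\), and \(M(r)\subseteq\Omega_n(M)\) for \(r\le n\). However, it misdescribes the step the Remark is about. For homogeneous \(v\) and \(\eta\in\Omega_n(M)\), the Jacobi identity writes \(o(u\circ^s_{n,t}v)\eta\) as a combination of \(u_{\operatorname{wt}u-n-2-(t-s)-i}\,v_{\operatorname{wt}v+n+(t-s)+i}\,\eta\) and \(v_{\operatorname{wt}v-n-2-s-i}\,u_{\operatorname{wt}u+n+s+i}\,\eta\) with \(i\ge0\). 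The first family vanishes because the inner \(v\)-mode kills \(\eta\), which needs \(t\ge s\). The second vanishes because the inner \(u\)-mode kills \(\eta\), which needs \(s\ge0\). Your module argument never uses \(t\ge s\). Your reason for \(s\ge0\), polynomiality of \((1+z)^{\operatorname{wt}u+n+s}\), is not the operative one: \(\mathbf1\circ^1_{0,0}\mathbf1=\operatorname{Res}_z\mathbf1\,(1+z)z^{-2}=\mathbf1\) has a nonnegative exponent but acts as the identity, and only \(s\le t\) excludes it. You also omit the check that \(o(a)\) preserves \(\Omega_n(M)\). It follows from \(u_k\,o(a)\eta=\sum_j\binom kj(u_ja)_{\operatorname{wt}a-1+k-j}\eta\) for \(k\ge\operatorname{wt}u+n\).

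On the algebra side, the left-ideal step does close up, but not because extra powers of \((1+z)\) keep you in the family; a single extra factor of \((1+z)\) can break \(s\le t\). Instead, the commutator term of \(a*_n(u\circ^s_{n,t}v)\) yields \((a_{i+j}u)\circ^{s'}_{n,t'}v\) with \(s'=n+s+j+1\) and \(t'=n+m+1+t+j\). Here \(i,j\) index the expansion of \((1+z_2+z_0)^{\operatorname{wt}a+n}(z_2+z_0)^{-n-m-1}\), so \(t'-s'=m+t-s\ge0\).

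Associativity, which is the substance of Ren's theorem, is asserted without any mechanism. The right-ideal property and the right identity \(v*_n\mathbf1\equiv v\) are not addressed.

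Your route to centrality also undercuts your claim that the complex proof transfers because only integer binomials occur. In characteristic \(p\), \(e^{zL(-1)}\) must be read as \(\sum_kz^kL(-1)^{(k)}\) with \(L(-1)^{(k)}\xi=\xi_{-k-1}\mathbf1\). The complex argument concludes with \(L(-1)^{(k)}\xi\equiv\binom{-\operatorname{wt}\xi}{k}\xi\), which is obtained by dividing by \(k\). The translation relation gives only \(kL(-1)^{(k)}\xi+(\operatorname{wt}\xi+k-1)L(-1)^{(k-1)}\xi\in O_n(V)\), which carries no information about \(L(-1)^{(k)}\xi\) when \(p\mid k\). For \(k\ge2n+1\) you could use the relations \(\xi\circ^0_{n,t}\mathbf1=\sum_k\binom{\operatorname{wt}\xi+n}{2n+1+t-k}L(-1)^{(k)}\xi\), which are monic in \(L(-1)^{(2n+1+t)}\xi\). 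You still need a separate argument when \(p\mid k\le2n\).
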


For the explicit calculations below, we record the two families of
\(O_n(V)\)-relations that will be used repeatedly.

\begin{lemma}
\label{lem:higher-Zhu-On-relations}
\citep{Ren17}
Let \(n\ge0\). In \(A_n(V)=V/O_n(V)\), one has
\((L(-1)+L(0))u\in O_n(V)\) for every homogeneous \(u\in V\).
Moreover, for homogeneous \(a,b\in V\) and nonnegative integers \(s,t\)
with \(0\le s\le t\),
\[
a\circ^s_{n,t}b
:=
\operatorname{Res}_zY(a,z)b\,
\frac{(1+z)^{\operatorname{wt}(a)+n+s}}{z^{2n+2+t}}
\in O_n(V).
\]
In particular, taking \(a=\omega\), \(b=u\), and \(s=0\), one obtains,
for every \(t\ge0\), the congruence
\[
\sum_{j=0}^{n+2}\binom{n+2}{j}
L(-(2n+t+3-j))u\equiv0
\]
in \(A_n(V)\).
\end{lemma}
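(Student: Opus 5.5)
The first two assertions come straight from the definitions, and the congruence is a residue computation. Since \(\omega\) and \(\mathbf 1\) generate \(\bar V\), the operator \(L(-1)\) equals \(\omega_0\), and \((L(-1)+L(0))u\) for homogeneous \(u\) is one of the spanning vectors in the definition of \(O_n(V)\). The vectors \(a\circ^s_{n,t}b\) with \(0\le s\le t\) are likewise generators of \(O_n(V)\) by definition, as fixed in the Remark above. So I only need to derive the displayed congruence by specializing to \(a=\omega\), \(b=u\), \(s=0\).

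I take \(\operatorname{wt}(\omega)=2\). Then \(\omega\circ^0_{n,t}u=\operatorname{Res}_z Y(\omega,z)u\,(1+z)^{n+2}z^{-(2n+2+t)}\). I expand \((1+z)^{n+2}=\sum_{j=0}^{n+2}\binom{n+2}{j}z^j\); this is a finite sum with integer binomial coefficients, read in \(F\). I write \(Y(\omega,z)=\sum_k L(k)z^{-k-2}=\sum_k \omega_k z^{-k-1}\), so that \(\omega_k=L(k-1)\). The residue picks out the coefficient of \(z^{-1}\). For the \(j\)-th term this means \(\omega_k z^{-k-1+j-2n-2-t}\) with \(-k-1+j-2n-2-t=-1\), that is, \(k=j-2n-2-t\).

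The corresponding mode is therefore \(\omega_{j-2n-2-t}=L(j-2n-3-t)=L(-(2n+t+3-j))\). Summing over \(j\) gives
\[
\omega\circ^0_{n,t}u=\sum_{j=0}^{n+2}\binom{n+2}{j}L(-(2n+t+3-j))u\in O_n(V),
\]
which is the claimed congruence in \(A_n(V)\). The constraint \(0\le s\le t\) holds for \(s=0\) and every \(t\ge0\), so this case is admissible.

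Two points need care, though neither is a real obstacle. First, the index shift between \(\omega_k\) and \(L(k-1)\) must be tracked correctly. Second, I must confirm that no positive-characteristic subtlety intervenes. Binomial coefficients that vanish mod \(7\) simply drop out, and that is harmless because the identity holds coefficientwise. Homogeneity of \(u\) is needed only so that the definition of \(\circ\) applies with \(\operatorname{wt}(\omega)=2\); in fact the definition requires only that \(a\) be homogeneous.
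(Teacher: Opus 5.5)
Your proposal is correct and matches the paper: the first two assertions hold because these vectors are generators of \(O_n(V)\) by definition (the paper simply cites Ren), and the congruence is the residue extraction \(k=j-2n-3-t\), which the paper carries out in the proof of Lemma~\ref{lem:An-omega-high-index}. Your appeal to \(\omega\) generating \(\bar V\) is unnecessary, since \(L(-1)=\omega_0\) holds in any vertex operator algebra directly from \(Y(\omega,z)=\sum_n L(n)z^{-n-2}\).
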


The upper bound will first be proved for a singular-vector quotient and
then transferred to its simple quotient. We shall use the functoriality
of \(A_n\) under quotient maps. Let \(f:U\to U'\) be a surjective
homomorphism of vertex operator algebras preserving the vacuum and
conformal vectors. For the generators of \(O_n(U)\), one has
\(f(a\circ^s_{n,t}b)=f(a)\circ^s_{n,t}f(b)\) and
\(f((L(-1)+L(0))\xi)=(L(-1)+L(0))f(\xi)\), and hence
\(f(O_n(U))\subseteq O_n(U')\). Moreover,
\(f(a*_nb)=f(a)*_nf(b)\). Therefore, for every \(n\ge0\), the map \(f\)
induces a surjective homomorphism of unital algebras
\(A_n(f):A_n(U)\to A_n(U')\).

\section{Proof of the main result}
\label{sec:main-proof}

\subsection{Low homogeneous subspaces and PBW straightening}

The vector \(q=(L(-2)^2-2L(-4))\mathbf1\) is a singular vector of degree
\(4\) in \(\bar V\)~\citep{DR16}. Let
\(I(q)=U(\mathrm{Vir}_F)q\). The vertex operator algebra \(\bar V\) is
strongly generated by \(\omega\), and \(I(q)\) is stable under all modes
of \(\omega\), equivalently under all Virasoro modes. The mode iterate
formula~\citep{DR14} therefore implies stability under the modes of every element of
\(\bar V\), so \(I(q)\) is a graded vertex algebra ideal. Since \(q\) is
singular, the PBW theorem~\citep{DR16} gives
\(I(q)=U(\mathrm{Vir}_{F,-})q\), where
\(\mathrm{Vir}_{F,-}=\bigoplus_{m\ge1}F L(-m)\). This ideal is
concentrated in degrees at least \(4\), and is therefore proper. The
maximality of \(J\) gives \(I(q)\subseteq J\). Set
\(V_q:=\bar V/I(q)\). We therefore have
natural quotient maps \(\pi_q:\bar V\to V_q\) and
\(\pi_V:V_q\to V\).
For the calculations in \(A_1(V_q)=V_q/O_1(V_q)\), if \(a,b\in V_q\),
write \(a\equiv_{A_1}b\) when \(a-b\in O_1(V_q)\), or equivalently when
\([a]_1=[b]_1\) in \(A_1(V_q)\). If \(a,b\in\bar V\), the notation
\(a\equiv_{A_1}b\) means
\(\pi_q(a)-\pi_q(b)\in O_1(V_q)\).
The PBW upper bound will first be established for \(V_q\) and then
transferred to the simple quotient \(V\) through \(\pi_V\). The
representation-theoretic lower bound will be obtained directly from
\(V\)-modules.

We next recall the fermionic realization that provides the required
\(V\)-modules. Let \(H_F=Fa\) be a one-dimensional \(F\)-space with
symmetric bilinear form determined by \((a,a)=1\). Let
\(\mathcal F=V(H_F,\mathbb Z+\frac12)\) be the one-dimensional free
fermion vertex operator superalgebra, with parity decomposition
\(\mathcal F=\mathcal F^{\bar0}\oplus\mathcal F^{\bar1}\), where
\(\mathcal F^{\bar0}\) and \(\mathcal F^{\bar1}\) are its even and odd
subspaces, respectively. Its conformal vector is
\(\omega=\frac12a(-\tfrac32)a(-\tfrac12)\mathbf1\), and its central
charge is \(\frac12=4\in F\)~\citep{DR16,DLR24}.

The following realization identifies the simple Virasoro subalgebra and
provides the nonvacuum module needed for the lower-bound argument.

\begin{theorem}
\label{thm:fermionic-realization}
\citep{DR16}
Let \(V^{\mathrm{Vir}}\subset\mathcal F^{\bar0}\) be the vertex operator
subalgebra generated by \(\omega\), and let
\(\mathcal L_4\subset\mathcal F^{\bar1}\) be the
\(V^{\mathrm{Vir}}\)-submodule generated by
\(a(-\frac12)\mathbf1\). Then
\(V^{\mathrm{Vir}}\cong V=L(\frac12,0)_F\), and \(\mathcal L_4\) is the
irreducible admissible \(V\)-module of highest weight \(4\).
\end{theorem}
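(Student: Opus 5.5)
Theorem~\ref{thm:fermionic-realization} is cited from \citep{DR16}, so the natural plan is to reconstruct its proof inside the fermionic model: first identify the Virasoro subalgebra with the simple quotient, then identify the odd submodule.

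\emph{The vertex operator subalgebra.} Inside \(\mathcal F^{\bar0}\), the modes of \(\omega=\frac12a(-\tfrac32)a(-\tfrac12)\mathbf1\) satisfy the Virasoro relations with \(c=\frac12\). This is the standard free-fermion computation, carried out over \(\mathbb Z[\frac12]\) and reduced modulo \(7\). Consequently the universal property of \(\bar V\) gives a surjection \(\bar V\to V^{\mathrm{Vir}}\) sending \(\mathbf1\mapsto\mathbf1\) and the conformal vector to \(\omega\). Its kernel is a graded ideal, so it suffices to show that \(V^{\mathrm{Vir}}\) is simple. Then the kernel is the maximal ideal \(J\), and \(V^{\mathrm{Vir}}\cong\bar V/J=V\).

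\emph{Simplicity.} For a graded Virasoro module generated by a vacuum-like vector, simplicity is equivalent to the following statement: every nonzero homogeneous vector of positive degree in \(V^{\mathrm{Vir}}\) can be sent to a nonzero multiple of \(\mathbf1\) by positive modes \(L(n)\), \(n>0\). Equivalently, \(V^{\mathrm{Vir}}\) contains no singular vectors of positive degree. I would prove this using the invariant bilinear form on \(\mathcal F\) coming from \((a,a)=1\). This form is nondegenerate on each homogeneous space of \(\mathcal F\) and satisfies \(L(n)^\dagger=L(-n)\). I would then show that its restriction to \(V^{\mathrm{Vir}}\) is nondegenerate. The radical of that restriction is an ideal, so nondegeneracy forces \(V^{\mathrm{Vir}}\) to be simple.

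\emph{The main obstacle.} Nondegeneracy of the form restricted to a subspace is not automatic. My first attempt would be to argue by comparison with Kac determinants. The degree-\(N\) Gram matrix of \(\bar V\) is the vacuum Kac determinant, so the radical of \(\bar V\) is \(J\). The claim then becomes \(\dim V^{\mathrm{Vir}}_N=\operatorname{rank}\) of the Gram matrix modulo \(7\). This is the same as identifying \(\ker(\bar V\to\mathcal F)\) with \(J\). The inclusion of the kernel in \(J\) is clear: an element of the kernel pairs to zero with everything, since the form on \(\bar V\) is pulled back from \(\mathcal F\). For the converse, take \(v\in J\). Then \(\langle \mathcal F\text{-image of }v,\ V^{\mathrm{Vir}}\rangle=0\), but this does not immediately give that the image is zero. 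Here I would use the explicit structure of \(\mathcal F^{\bar0}\) as a span of monomials in fermion modes, together with a character argument, showing that the image of \(J\) is spanned by vectors orthogonal to all of \(\mathcal F\). This is the step I expect to be hardest, and it may require the explicit generators of \(J\) found in \citep{DR16}, starting with \(q\).

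\emph{The module \(\mathcal L_4\).} The vector \(a(-\tfrac12)\mathbf1\) satisfies \(L(n)a(-\tfrac12)\mathbf1=0\) for \(n>0\) and \(L(0)a(-\tfrac12)\mathbf1=\tfrac12\,a(-\tfrac12)\mathbf1\), and \(\tfrac12=4\) in \(F\). Hence \(\mathcal L_4\) is a highest weight quotient of \(V(\tfrac12,4)_F\). It is a \(V\)-module because \(\mathcal F^{\bar1}\) is an \(\mathcal F^{\bar0}\)-module. Irreducibility would follow by the same form argument, this time restricted to \(\mathcal L_4\subset\mathcal F^{\bar1}\). Admissibility comes from the grading of \(\mathcal F^{\bar1}\) shifted by \(\tfrac12\).
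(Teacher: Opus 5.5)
The paper gives no proof of this statement; it imports it from \citep{DR16}, so the only question is whether your reconstruction closes. Your reduction is correct as far as it goes. The map \(\bar V\to V^{\mathrm{Vir}}\) is surjective, \(J\) is the radical of the Shapovalov form, and the fermionic form is nondegenerate on \(\mathcal F\) with \(L(n)^\dagger=L(-n)\). Simplicity of \(V^{\mathrm{Vir}}\) is therefore equivalent to nondegeneracy of the restricted form, that is, to \(\ker(\bar V\to\mathcal F)=J\). But that equality is the entire content of the theorem, and the proposal stops exactly there. Your proposed step, showing that the image of \(J\) consists of vectors orthogonal to all of \(\mathcal F\), does not advance it. Because the form on \(\mathcal F\) is nondegenerate, it is just a restatement of \(J\subseteq\ker(\bar V\to\mathcal F)\).

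The tools you suggest for closing this step are not available.

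\emph{Character comparison.} You cannot use the characteristic-zero identity \(\operatorname{ch}L(\frac12,0)=\operatorname{ch}\mathcal F^{\bar0}\). The image of \(q\) in \(\mathcal F\) is \(-\frac74\bigl(a(-\frac72)a(-\frac12)+a(-\frac52)a(-\frac32)\bigr)\mathbf1\), which is \(0\) in \(F\). So the degree-\(4\) part of \(V^{\mathrm{Vir}}\) is the line \(F\,L(-4)\mathbf1\) inside the plane \(\mathcal F^{\bar0}\cap\mathcal F_4\).

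\emph{Singular vectors.} The vector \(w=\bigl(2a(-\frac72)a(-\frac12)+a(-\frac52)a(-\frac32)\bigr)\mathbf1\) satisfies \(L(1)w=7a(-\frac52)a(-\frac12)\mathbf1=0\), \(L(2)w=\frac72a(-\frac32)a(-\frac12)\mathbf1=0\), and \(\langle w,V^{\mathrm{Vir}}\rangle=0\). Thus \(\mathcal F^{\bar0}\) genuinely contains positive-degree Virasoro singular vectors orthogonal to \(V^{\mathrm{Vir}}\). What must be proved is that no such vector ever lies inside \(V^{\mathrm{Vir}}\), in any degree. You give no way to compute either \(\dim V^{\mathrm{Vir}}_N\) or the rank modulo \(7\) of the degree-\(N\) vacuum Gram matrix for all \(N\).

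\emph{Generators of \(J\).} You also cannot test generators of \(J\) against \(\mathcal F\). The only element of \(J\) at hand is \(q\), and its vanishing in \(\mathcal F\) yields only \(I(q)\subseteq\ker(\bar V\to\mathcal F)\subseteq J\). The paper itself works in \(V_q\) precisely because nothing further about \(J\) is controlled.

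\emph{The module \(\mathcal L_4\).} The same gap recurs here. Irreducibility by ``the same form argument'' needs nondegeneracy of the form on \(\mathcal L_4\subset\mathcal F^{\bar1}\), which is again not automatic. More importantly for the paper's lower bound, \(\mathcal L_4\) is a \(V\)-module, rather than merely a \(V^{\mathrm{Vir}}\)-module, only once \(V^{\mathrm{Vir}}\cong V\) has been established.
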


Write \(\mathcal F_r\) for the subspace of conformal degree \(r\), where
\(r\in\frac12\mathbb Z_{\ge0}\). Thus
\(\mathcal F=\bigoplus_r\mathcal F_r\).
As a \(V^{\mathrm{Vir}}\)-module, we grade \(\mathcal L_4\) admissibly
from its lowest conformal degree by
\(\mathcal L_4(m)=\mathcal L_4\cap\mathcal F_{m+\frac12}\) for
\(m\in\mathbb Z_{\ge0}\).
Thus \(a(-\frac12)\mathbf1\in\mathcal L_4(0)\), whereas
\(a(-\frac32)\mathbf1\in\mathcal L_4(1)\).

The analysis of generators for the full even fermionic algebra
in~\citep{DLR24} gives
\(V^{\mathrm{Vir}}\subsetneq\mathcal F^{\bar0}\) in characteristic \(7\).
Together with the preceding construction, the four objects used here are
related by the natural quotient maps and inclusion
\[
\bar V\to V_q\to
V\cong V^{\mathrm{Vir}}\subsetneq\mathcal F^{\bar0}.
\]

To obtain the three one-dimensional representations of \(A_1(V)\), we
now determine the first two homogeneous subspaces of the relevant
modules.

\begin{lemma}
\label{lem:low-layers}
Let \(\mathcal L_0=V\). Then
\[
\dim\mathcal L_0(0)=1,\qquad
\dim\mathcal L_0(1)=0,\qquad
\dim\mathcal L_4(0)=\dim\mathcal L_4(1)=1.
\]
\end{lemma}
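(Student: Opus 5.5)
For \(\mathcal L_0=V=\bar V/J\), I would work with the grading by \(L(0)\)-degree inherited from the PBW basis of \(\bar V\). The only PBW monomials \(L(-n_1)\cdots L(-n_r)\mathbf1\) with \(n_i\ge2\) have total degree \(\sum n_i\). So \(\bar V_0=F\mathbf1\) and \(\bar V_1=0\), since a single mode already contributes degree at least \(2\). The ideal \(J\) is proper and graded. If \(J_0\neq0\), then \(\mathbf1\in J\), and hence \(J=\bar V\). Therefore \(J_0=0\), giving \(\dim\mathcal L_0(0)=1\). Since \(\mathcal L_0(1)\) is a quotient of \(\bar V_1=0\), we also get \(\dim\mathcal L_0(1)=0\).

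For \(\mathcal L_4\), I would use the fermionic realization of Theorem~\ref{thm:fermionic-realization} and compute the relevant homogeneous pieces of \(\mathcal F^{\bar1}\) directly. The odd part of the free fermion space is spanned by monomials \(a(-k_1)\cdots a(-k_p)\mathbf1\) with distinct half-integers \(k_1>\cdots>k_p\ge\frac12\) and \(p\) odd, of conformal degree \(\sum k_i\). In degree \(\frac12\) the only such monomial is \(a(-\frac12)\mathbf1\). In degree \(\frac32\) the candidates are \(a(-\frac32)\mathbf1\) and the three-fermion monomials, but three distinct half-integers \(\ge\frac12\) sum to at least \(\frac12+\frac32+\frac52=\frac92\). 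So \(\mathcal F^{\bar1}_{3/2}=Fa(-\frac32)\mathbf1\). This gives the upper bounds \(\dim\mathcal L_4(0)\le1\) and \(\dim\mathcal L_4(1)\le1\), since \(\mathcal L_4(m)=\mathcal L_4\cap\mathcal F_{m+\frac12}\).

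For the lower bounds, \(a(-\frac12)\mathbf1\in\mathcal L_4(0)\) is nonzero by construction. For \(\mathcal L_4(1)\), I would show that \(L(-1)a(-\frac12)\mathbf1\) is a nonzero multiple of \(a(-\frac32)\mathbf1\). Using \(\omega=\frac12a(-\frac32)a(-\frac12)\mathbf1\), the standard free fermion computation gives \(L(-1)a(-\frac12)\mathbf1=a(-\frac32)\mathbf1\); equivalently, \(L(-1)\) acts as translation, and the translate of \(a(-\frac12)\mathbf1\) is \(a(-\frac32)\mathbf1\). This is the only step needing an actual calculation. The main subtlety is making sure the coefficient does not vanish in characteristic \(7\). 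Here it is \(1\), or at worst a ratio involving \(2\), which is invertible. One can also confirm nonvanishing through \(L(1)L(-1)v=2L(0)v=2\cdot4\,v=8v=v\neq0\) for \(v=a(-\frac12)\mathbf1\), which uses only the Virasoro relation \([L(1),L(-1)]=2L(0)\) and the highest weight \(4\).

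Hence \(L(-1)v\neq0\) lies in \(\mathcal L_4\cap\mathcal F_{3/2}\), so \(\dim\mathcal L_4(1)=1\). The \(L(1)L(-1)v\) check avoids any explicit normal-ordering, and I would present it as the primary argument.
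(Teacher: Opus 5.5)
Your proof is correct and follows the paper's argument: a PBW degree count for the vacuum module (your explicit check that \(J_0=0\) is a welcome addition), the fermionic PBW basis for the bounds \(\dim\mathcal L_4(0)\le1\) and \(\dim\mathcal L_4(1)\le1\), and the nonzero vector \(L(-1)a(-\frac12)\mathbf1=a(-\frac32)\mathbf1\) for the lower bound. Your alternative check \(L(1)L(-1)v=2L(0)v=v\) is a valid substitute for the paper's commutator \([L(-1),a(-\frac12)]=a(-\frac32)\); it uses \(L(1)v=0\), which holds for degree reasons. One wording point: the grading you actually use is the integral PBW degree, not the \(L(0)\)-eigenvalue, and in characteristic \(7\) the \(L(0)\)-eigenvalue would not separate degree \(0\) from degree \(7\).
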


\begin{proof}
The degree-zero subspace of the vacuum module is spanned by \(\mathbf1\),
while its degree-one subspace vanishes by the vacuum condition
\(L(-1)\mathbf1=0\).

For the shifted admissible grading above, set
\(v_4=a(-\frac12)\mathbf1\). The fermionic PBW basis~\citep{DR16} gives
\(\mathcal L_4(0)=Fv_4\) and
\(\mathcal L_4(1)=F a(-\tfrac32)\mathbf1=F L(-1)v_4\).
Indeed,
\([L(-1),a(-\tfrac12)]=a(-\tfrac32)\), and hence
\(L(-1)a(-\tfrac12)\mathbf1=a(-\tfrac32)\mathbf1\).
Therefore
\(\dim\mathcal L_4(0)=\dim\mathcal L_4(1)=1\).
\end{proof}

By Theorem~\ref{thm:modular-An-well-defined}, for \(h\in\{0,4\}\) and
\(0\le m\le n\), the low homogeneous subspace \(\mathcal L_h(m)\) is an
\(A_n(V)\)-module under the zero-mode action. Since
\(o(\omega)=L(0)\), the element \(x_n=[\omega]_n\) acts on
\(\mathcal L_h(m)\) with eigenvalue \(h+m\). The lower bound for
\(A_1(V)\) will therefore use
\(\mathcal L_0(0)\), \(\mathcal L_4(0)\), and \(\mathcal L_4(1)\), with
eigenvalues \(0,4,5\), respectively.

We now turn to the upper bound. The first simplification uses the
singular-vector relation to eliminate repeated \(L(-2)\)-factors.

\begin{lemma}
\label{lem:An-remove-two-L2}
Every vacuum word \(Q\) containing at least two \(L(-2)\)-factors is,
in \(V_q\), an \(F\)-linear combination of vacuum PBW monomials whose
lengths are strictly smaller than \(\operatorname{len}(Q)\).
Consequently, for every \(n\ge0\), the same conclusion holds in
\(A_n(V_q)\) and, via the natural quotient map, in \(A_n(V)\). Repeated
application therefore leaves at most one \(L(-2)\)-factor in each PBW
monomial.
\end{lemma}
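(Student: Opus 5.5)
The plan is to argue by strong induction on the length \(r=\operatorname{len}(Q)\), and for fixed length by induction on the number of "inversions" of the word (pairs \(i<j\) with \(m_i<m_j\)), so that commutators can be used freely. The basic input is that \(q=(L(-2)^2-2L(-4))\mathbf1\) maps to \(0\) in \(V_q\). Hence any vacuum word of the form \(X\,L(-2)L(-2)\mathbf1\), with \(X\) a product of modes \(L(-m_i)\), \(m_i\ge2\), equals \(2X\,L(-4)\mathbf1\) in \(V_q\), because \(Xq\in I(q)\). The word \(XL(-4)\mathbf1\) has length \(\operatorname{len}(Q)-1\). Putting it into PBW normal form produces only monomials of length at most \(\operatorname{len}(Q)-1\), since commutators \([L(-a),L(-b)]=(b-a)L(-a-b)\) with \(a,b\ge2\) never increase length; no central term appears because \(a+b\neq0\).

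The main step is to bring the two \(L(-2)\)-factors of \(Q\) next to each other at the right end. Write \(Q\) first in PBW normal form: modulo monomials of strictly smaller length (which are handled by the remark above), \(Q\) equals the sorted monomial with the same multiset of indices. This holds because swapping adjacent \(L(-a)L(-b)\) costs \((a-b)\)-multiples of a word of length \(r-1\). The sorted monomial has its \(L(-2)\)-factors at the right end, because \(2\) is the minimal index. So it is literally \(X L(-2)L(-2)\mathbf1\), and the first paragraph applies. Collecting everything, \(Q\) is, in \(V_q\), a combination of PBW monomials of length \(<\operatorname{len}(Q)\). Note that only length, not degree bookkeeping, is required. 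Also, the lower-length terms arising from commutators need not be reduced further: they already satisfy the conclusion once they are put into normal form.

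For the consequences, note that \(O_n(V_q)\) is a subspace, so an identity in \(V_q\) descends to \(A_n(V_q)=V_q/O_n(V_q)\). Applying the surjection \(\pi_V\) and the induced map \(A_n(\pi_V)\) from the functoriality discussion transfers it to \(V\) and to \(A_n(V)\). The final assertion follows by iterating. Each application strictly lowers length, so the process terminates, and it can only stop at monomials with at most one \(L(-2)\)-factor.

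I expect no serious obstacle. The only point needing care is to justify that \(Xq\) vanishes in \(V_q\) for an arbitrary product \(X\) of negative modes, i.e.\ that \(I(q)\) is stable under all \(L(-m)\). This was already established when \(I(q)=U(\mathrm{Vir}_F)q\) was introduced.
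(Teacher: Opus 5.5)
Your proposal is correct and follows essentially the same route as the paper. You reorder the word modulo shorter words until two $L(-2)$-factors sit at the right end; you sort the whole word, whereas the paper only commutes the $L(-2)$-factors rightward. Then you use $Xq\in I(q)$ to replace $XL(-2)^2\mathbf1$ by $2XL(-4)\mathbf1$ and note that PBW normalization does not increase length, with the passage to $A_n(V_q)$ and $A_n(V)$ and the termination of the iteration handled as in the paper.
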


\begin{proof}
Let \(Q=L(-k_1)\cdots L(-k_r)\mathbf1\) be such a vacuum word. For
\(m>2\), the Virasoro commutation relation gives
\(L(-2)L(-m)=L(-m)L(-2)+(m-2)L(-(m+2))\). The first term moves
\(L(-2)\) one place to the right without changing the length, whereas
the second term has one fewer negative-mode factor. Applying this
identity repeatedly to every \(L(-2)\)-factor gives
\(Q=R L(-2)^s\mathbf1+\sum_j Q_j\), where \(s\ge2\),
\(\operatorname{len}(R)+s=r\), and every vacuum word \(Q_j\) has length
strictly smaller than \(r\).

Set \(R'=R L(-2)^{s-2}\). Since \(q=0\) in \(V_q\), one has
\(R'L(-2)^2\mathbf1=2R'L(-4)\mathbf1\) in \(V_q\). Before PBW
normalization, the right-hand side has length
\(\operatorname{len}(R')+1=r-1\). Moreover, PBW normalization does not
increase length: an adjacent interchange produces a reordered word of
the same length together with a commutator term of smaller length, and
termination follows by induction on length and then on the number of
inversions. Hence the PBW normal forms of \(2R'L(-4)\mathbf1\) and all
\(Q_j\) contain only monomials of length strictly smaller than \(r\).
This proves the assertion in \(V_q\). Passing to \(A_n(V_q)\) and then
through the natural quotient map to \(A_n(V)\) proves the remaining
statements.
\end{proof}

After repeated \(L(-2)\)-factors have been eliminated, modes \(L(-m)\)
with \(m\ge5\) must be replaced by low-index modes. The following
relation provides this replacement.

\begin{lemma}
\label{lem:An-omega-high-index}
Fix \(n\ge0\), and let \(U\) be a vertex operator algebra. In
\(A_n(U)\), for every \(u\in U\) and every
\(m\ge2n+3\), the vector \(L(-m)u\) can be expressed recursively as an
\(F\)-linear combination of \(L(-(n+1))u,L(-(n+2))u,\ldots,
L(-(2n+2))u\).
\end{lemma}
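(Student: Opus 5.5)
The plan is to use the congruence recorded in Lemma~\ref{lem:higher-Zhu-On-relations} with \(a=\omega\), \(b=u\), \(s=0\), followed by induction on \(m\). First, reduce to homogeneous \(u\), since both \(O_n(U)\) and the statement are linear in \(u\). Next, compute \(\omega\circ^0_{n,t}u\) explicitly. Because \(\operatorname{wt}(\omega)=2\), it equals
\[
\operatorname{Res}_z\sum_{k}L(k)u\,z^{-k-2}\,\frac{(1+z)^{n+2}}{z^{2n+2+t}}
=\sum_{j=0}^{n+2}\binom{n+2}{j}L(-(2n+t+3-j))u .
\]
This confirms the displayed congruence for every \(t\ge0\), and it holds in \(A_n(U)\) for an arbitrary vertex operator algebra \(U\), because that lemma is stated for a general \(V\).

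The key observation is that the \(j=0\) term has coefficient \(\binom{n+2}{0}=1\), which is nonzero in every characteristic. Hence, setting \(m=2n+3+t\), the congruence can be solved for the top mode:
\[
L(-m)u\equiv-\sum_{j=1}^{n+2}\binom{n+2}{j}L(-(m-j))u \pmod{O_n(U)} .
\]
The indices on the right are \(m-1,\dots,m-n-2\). All of them are strictly smaller than \(m\), and all of them are at least \(m-n-2\ge n+1\). Some binomial coefficients with \(1\le j\le n+2\) may vanish mod \(p\); this causes no problem, since vanishing terms are simply absent and we only need an expression, not a specific one.

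Now run strong induction on \(m\ge 2n+3\), expressing \(L(-m)u\) in terms of \(L(-k)u\) with \(n+1\le k\le 2n+2\). In the base range, every index \(m-j\) on the right either lies in \([n+1,2n+2]\) or is some \(m'\) with \(2n+3\le m'<m\). In the latter case the induction hypothesis already expresses \(L(-m')u\) in the desired span modulo \(O_n(U)\). Substituting these expressions gives the claim, and the recursion is exactly this back-substitution.

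The only delicate point is checking the lower bound \(m-n-2\ge n+1\). This inequality is equivalent to \(m\ge 2n+3\), and it is precisely why the threshold in the statement is \(2n+3\). Notably, no division by a possibly vanishing binomial coefficient is ever needed, because the leading coefficient is \(1\).
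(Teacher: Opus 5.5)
Your proposal is correct and follows the paper's proof. You take the $s=0$ relation $\omega\circ^0_{n,t}u\in O_n(U)$, expand the residue to $\sum_{j=0}^{n+2}\binom{n+2}{j}L(-(2n+t+3-j))u\equiv0$, solve for the top mode using the unit leading coefficient, and induct on $m$; your explicit check that $m-n-2\ge n+1$ exactly when $m\ge 2n+3$ simply spells out a step the paper leaves implicit.
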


\begin{proof}
By Lemma~\ref{lem:higher-Zhu-On-relations},
\(\omega\circ^0_{n,t}u\in O_n(U)\) for every \(t\ge0\), and its
class in \(A_n(U)\) is therefore zero. Here
\(
\omega\circ^0_{n,t}u
=\operatorname{Res}_zY(\omega,z)u
\frac{(1+z)^{n+2}}{z^{2n+2+t}}.
\)
Taking the residue gives
\(
\sum_{j=0}^{n+2}\binom{n+2}{j}
L(-(2n+t+3-j))u\equiv0.
\)
Putting \(m=2n+t+3\), this congruence becomes
\(
L(-m)u\equiv
-\sum_{j=1}^{n+2}\binom{n+2}{j}L(-(m-j))u.
\)
Every index on the right-hand side is smaller than \(m\). Induction on
\(m\) therefore expresses every term with \(m\ge2n+3\) successively in
terms of those with indices \(n+1,\ldots,2n+2\).
\end{proof}

In particular, when \(n=1\), every \(L(-m)u\) with \(m\ge5\) is an
\(F\)-linear combination, in \(A_1(U)\), of the three
negative-mode terms with indices \(2,3,4\). More explicitly, for every
\(u\in U\) and \(m\ge5\),
\[
L(-m)u\equiv-3L(-(m-1))u-3L(-(m-2))u-L(-(m-3))u.
\]
The boundary case \(m=5\), used repeatedly below, is
\[
L(-5)u\equiv -3L(-4)u-3L(-3)u-L(-2)u.
\]
We apply this \(n=1\) case with \(U=V_q\). Together with the
pair elimination in Lemma~\ref{lem:An-remove-two-L2}, it expresses the
class of every PBW monomial as a linear combination of PBW monomials
involving only \(L(-2),L(-3),L(-4)\) and containing at most one
\(L(-2)\)-factor.

\subsection{\texorpdfstring{The structure of \(A_1(V)\)}{The structure of A1(V)}}
\label{sec:A1}

In this subsection, \(F\) remains an algebraically closed field of
characteristic \(7\), and \(0,4,5\in\mathbb F_7\subset F\) are viewed as
elements of \(F\). We continue to write \(V=L(\frac12,0)_F\). As
admissible \(V\)-modules, write \(\mathcal L_0=V\) for the vacuum module.
The module \(\mathcal L_4\) constructed above is isomorphic to
\(L(\frac12,\frac12)_F\). Since \(\frac12=4\) in \(F\), the subscript
\(4\) records its highest weight. Write
\(x_q=[\omega]_1\in A_1(V_q)\) and
\(x=[\omega]_1\in A_1(V)\).

For the triangular induction below, we parametrize these PBW monomials
as follows. Let
\(
\Lambda_1=
\{(\lambda_4,\lambda_3,\lambda_2)\in\mathbb Z_{\ge0}^3:
\lambda_2\in\{0,1\}\}.
\)
For \(\lambda=(\lambda_4,\lambda_3,\lambda_2)\in\Lambda_1\), set
\(
P_\lambda=
L(-4)^{\lambda_4}L(-3)^{\lambda_3}
L(-2)^{\lambda_2}\mathbf1\in\bar V.
\)
The three coordinates record the multiplicities of
\(L(-4),L(-3),L(-2)\), respectively. Thus \(\Lambda_1\) indexes this
family of PBW monomials. Define the lexicographic ordering parameter on
these monomials by
\[
\nu(P_{(\lambda_4,\lambda_3,\lambda_2)})
=(\lambda_3+\lambda_4,\lambda_2,-\lambda_3),
\]
and call a monomial lower if its \(\nu\)-parameter is smaller. Accordingly,
define
\[
\mathcal P_{<P_\lambda}
=\operatorname{span}_F
\{P_\mu:\mu\in\Lambda_1,\ \nu(P_\mu)<\nu(P_\lambda)\}
\]
and set \(\mathcal B_1=
\operatorname{span}_F\{P_{(0,0,0)},P_{(0,0,1)},P_{(1,0,1)}\}\).
We refer to these three monomials as the retained monomials. In
congruences modulo \(O_1(V_q)\), we suppress \(\pi_q\) and write
\(P_\lambda\) for \(\pi_q(P_\lambda)\).

The calculations below require both control of PBW length under modes
that annihilate the vacuum and comparison with the order determined by
\(\nu\). The next lemma records these two estimates.

\begin{lemma}
\label{lem:A1-length-order}
Let \(Q=L(-m_1)\cdots L(-m_r)\mathbf1\) be a vacuum PBW monomial, where
\(m_1\ge\cdots\ge m_r\ge2\), and let \(p\in\mathbb Z\) with \(p\ge-1\).
After moving \(L(p)\) to the right and putting the result into vacuum PBW
normal form, every nonzero PBW monomial occurring in \(L(p)Q\) has length
at most \(r\); every term arising from a central commutator has length at
most \(r-1\). Moreover, let
\(\lambda=(\lambda_4,\lambda_3,1)\in\Lambda_1\) and
\(\mu\in\Lambda_1\). If
\(\operatorname{len}(P_\mu)\le\lambda_3+\lambda_4\), then
\(P_\mu\in\mathcal P_{<P_\lambda}\).
\end{lemma}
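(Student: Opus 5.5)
The plan is to prove the two assertions separately. The first is a statement about how the PBW length behaves under a single mode \(L(p)\) with \(p\ge-1\). The second is a purely combinatorial comparison of \(\nu\)-parameters.

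\textbf{Length estimate.} I argue by induction on \(r\), and inside that on the position of \(L(p)\), using the commutation relation
\[
L(p)L(-m_i)=L(-m_i)L(p)+(p+m_i)L(p-m_i)+\delta_{p,m_i}\frac{p^3-p}{12}C .
\]
Moving \(L(p)\) one step to the right produces three kinds of terms.

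\begin{enumerate}[label=(\roman*)]
\item The reordered word \(L(-m_i)L(p)\cdots\). It keeps the \(r\) negative modes apart from the moving \(L(p)\).
\item The term \(L(p-m_i)\) replacing the pair \(L(p)L(-m_i)\). It has at most \(r\) negative-mode factors, since one factor \(L(-m_i)\) is traded for a single mode.
\item The central term. It deletes the factor \(L(-m_i)\) and replaces \(L(p)\) by the scalar \(\frac{p^3-p}{12}\cdot\frac12\). The resulting word has exactly \(r-1\) negative-mode factors.
\end{enumerate}

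When \(L(p)\) reaches \(\mathbf1\), it annihilates it because \(p\ge-1\): we have \(L(-1)\mathbf1=0\) in \(\bar V\), and \(L(p)\mathbf1=0\) for \(p\ge0\).

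In case (ii), \(L(p-m_i)\) may be a nonnegative mode, or \(L(-1)\). Then it is again a mode \(L(p')\) with \(p'\ge-1\) acting on a shorter word \(L(-m_{i+1})\cdots\mathbf1\) of length \(r-i\). The induction hypothesis applies to it, and the prefix \(L(-m_1)\cdots L(-m_{i-1})\) adds \(i-1\) factors. The total is at most \(r-1\).

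If \(p-m_i\le-2\), the word has length exactly \(r\) but need not be PBW ordered. I then invoke the observation already made in the proof of Lemma~\ref{lem:An-remove-two-L2}: PBW normalization never increases length. Commutators among negative modes \(L(-a),L(-b)\) with \(a,b\ge2\) are never central, because \(a+b\ne0\). So normalization contributes no further central terms, and the central-term bound \(r-1\) survives normalization.

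\textbf{Order comparison.} Let \(\lambda=(\lambda_4,\lambda_3,1)\). Then \(\operatorname{len}(P_\lambda)=\lambda_3+\lambda_4+1\), while for any \(\mu\in\Lambda_1\) we have \(\operatorname{len}(P_\mu)=\mu_4+\mu_3+\mu_2\). I split into two cases according to \(\mu_2\in\{0,1\}\).

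\begin{enumerate}[label=(\alph*)]
\item If \(\mu_2=0\), then \(\mu_3+\mu_4\le\lambda_3+\lambda_4\). Either this inequality is strict, or there is equality. With equality, the second coordinate of \(\nu\) gives \(0<1\). In both cases \(\nu(P_\mu)<\nu(P_\lambda)\).
\item If \(\mu_2=1\), then \(\mu_3+\mu_4\le\lambda_3+\lambda_4-1\). The first coordinate is strictly smaller, so \(\nu(P_\mu)<\nu(P_\lambda)\).
\end{enumerate}

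Hence \(P_\mu\in\mathcal P_{<P_\lambda}\).

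The main obstacle I expect is the bookkeeping in case (ii). A mode \(L(p-m_i)\) with \(p-m_i\ge-1\) must be pushed further right through a new induction, while a negative one stays in place. The estimate must also keep separate track of central contributions, so that the bound \(r-1\) holds for them and not just the bound \(r\). I would handle this by a single induction on \(r\) that carries both bounds simultaneously.
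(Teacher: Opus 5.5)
Your proposal is correct and follows essentially the same route as the paper. Both arguments run an induction on \(r\) via \(L(p)L(-m_i)=L(-m_i)L(p)+(p+m_i)L(p-m_i)+\text{central}\), splitting into the reordered, \(L(p-m_i)\), and central terms and bounding each in the same way, and both compare \(\nu\)-parameters by the same two cases \(\mu_2\in\{0,1\}\). The differences are cosmetic: you push \(L(p)\) through one position at a time instead of peeling off \(L(-m_1)\) and applying the induction hypothesis to \(L(p)Q'\), and you state explicitly the (correct) fact, used implicitly in the paper, that reordering modes \(L(-a),L(-b)\) with \(a,b\ge2\) produces no central terms.
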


\begin{proof}
We prove the first assertion by induction on \(r\). For \(r=0\), it
follows from \(L(p)\mathbf1=0\) for \(p\ge-1\). For \(r>0\), write
\(Q=L(-m_1)Q'\). The Virasoro relation gives
\[
L(p)Q
=(p+m_1)L(p-m_1)Q'
+\delta_{p,m_1}\frac{p^3-p}{12}C Q'+L(-m_1)L(p)Q'.
\]
If \(p-m_1\le-2\), the first term is a word of length \(r\); if
\(p-m_1\ge-1\), the induction hypothesis applied to \(Q'\) gives length
at most \(r-1\). The central term has length \(r-1\). The induction
hypothesis also shows that the PBW monomials in \(L(p)Q'\) have length at
most \(r-1\); adjoining \(L(-m_1)\) and restoring PBW order therefore
gives length at most \(r\). Any central term arising in the induction
step for \(L(p)Q'\) has length at most \(r-2\) before \(L(-m_1)\) is
adjoined, and hence at most \(r-1\) afterward. PBW normalization does not
increase length, so the first assertion follows.

Set \(d=\lambda_3+\lambda_4\). By definition,
\(\operatorname{len}(P_\mu)=\mu_4+\mu_3+\mu_2\), while
\(\nu(P_\lambda)=(d,1,-\lambda_3)\). If \(\mu_2=1\), then
\(\mu_3+\mu_4\le d-1\), so the first coordinate of \(\nu(P_\mu)\) is
smaller than \(d\). If \(\mu_2=0\), then
\(\mu_3+\mu_4\le d\). A strict inequality is again decided by the first
coordinate, whereas equality is decided by the second coordinate
\(0<1\). Thus in both cases
\(\nu(P_\mu)<\nu(P_\lambda)\), proving the assertion.
\end{proof}

By the vacuum PBW theorem~\citep{DR16}, the vectors
\(L(-i_1)\cdots L(-i_r)\mathbf1\), where
\(i_1\ge\cdots\ge i_r\ge2\),
form a PBW basis of \(\bar V\), where \(r=0\) corresponds to
\(\mathbf1\). Let \(\mathcal P_1=
\operatorname{span}_F\{P_\lambda:\lambda\in\Lambda_1\}\subset\bar V\).
We now define the normalization map
\(\mathcal N_1\colon\bar V\to\mathcal P_1\) recursively, first by total
degree and then, for a fixed total degree, by PBW length. Let
\(Q=L(-i_1)\cdots L(-i_r)\mathbf1\) be a PBW basis vector. Its value is
determined by the following rules; the branches are interpreted in the
order in which they are listed. Every word occurring on a right-hand
side is first put into its unique PBW normal form in \(\bar V\), and the
rules are then applied again to the resulting PBW basis vectors:
\[
\mathcal N_1(Q)=
\begin{cases}
\Phi_m(Q'),
& \text{if }Q=L(-m)Q',\ m\ge5,\\
2\mathcal N_1(RL(-4)\mathbf1),
& \text{if }Q=R L(-2)^2\mathbf1
  \quad\text{and the first case does not apply},\\
P_\lambda,
& \text{if }Q=P_\lambda,\ \lambda\in\Lambda_1,
\end{cases}
\]
where
\[
\Phi_m(Q')=-3\mathcal N_1(L(-(m-1))Q')
-3\mathcal N_1(L(-(m-2))Q')-\mathcal N_1(L(-(m-3))Q').
\]

For a general element \(\xi\in\bar V\), take its unique PBW expansion
\(\xi=\sum_\alpha c_\alpha Q_\alpha\) and define
\(\mathcal N_1(\xi)=\sum_\alpha c_\alpha\mathcal N_1(Q_\alpha)\).
This gives an \(F\)-linear map
\(\mathcal N_1\colon\bar V\to\mathcal P_1\). Neither the preceding rules
nor the passage to PBW normal form increases PBW length: the first case
replaces one negative mode by one negative mode; the second replaces two
\(L(-2)\)-factors by one \(L(-4)\)-factor; and commutator terms arising
from PBW reordering decrease the number of negative-mode factors.

Whenever an expression of the form \((L(-1)+L(0))\xi\) or
\(u\circ^s_{1,t}v\) is expanded below, the relevant vectors are first
regarded as vectors in \(\bar V\). The mode iterate formula, the Virasoro
commutation relations, and the vacuum condition are then used to obtain a
PBW expansion. The relation \(q=0\) and the congruences in \(O_1(V_q)\)
enter subsequently through the action of \(\mathcal N_1\) and the
argument in the quotient space, respectively.

Before applying \(\mathcal N_1\) to the triangular relations, we verify
that its recursive definition terminates and preserves the represented
class in \(A_1(V_q)\).

\begin{lemma}
\label{lem:A1-normalization-map}
The preceding normalization procedure terminates after finitely many
steps. For every \(Q\in\bar V\), one has
\(\pi_q(Q)\equiv_{A_1}\pi_q(\mathcal N_1(Q))\).
\end{lemma}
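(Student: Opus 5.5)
Termination will follow from a well-founded induction on a measure attached to PBW basis vectors, and the congruence will come from checking each branch against Lemma~\ref{lem:An-omega-high-index} (with \(n=1\)) and the relation \(q=0\). Linearity then extends both claims from basis vectors to all of \(\bar V\).

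\textbf{Choice of measure.} Attach to a PBW basis vector \(Q=L(-i_1)\cdots L(-i_r)\mathbf1\) the measure \((r,\sum_j \max(i_j-4,0), |Q|)\), and order such triples lexicographically. Total degree alone does not decrease, because the first branch replaces \(L(-m)\) by \(L(-(m-1))\), \(L(-(m-2))\), \(L(-(m-3))\). What does decrease is the \emph{excess} \(\sum_j\max(i_j-4,0)\).

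\textbf{Termination.} I would check each branch separately.

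\emph{First branch.} Write \(Q=L(-m)Q'\) with \(m\ge5\), so \(m=i_1\) is the largest index. Each word \(L(-(m-j))Q'\) with \(j\in\{1,2,3\}\) has length \(r\) before normalization. PBW reordering of \(L(-(m-j))\) past the factors of \(Q'\) produces two kinds of terms:
\begin{itemize}
\item the reordered word, with the same multiset of indices and hence smaller excess (or equal excess but smaller total degree once \(m-j\le 4\));
\item commutator terms of length at most \(r-1\), by Lemma~\ref{lem:A1-length-order} and the length remarks after the definition of \(\mathcal N_1\).
\end{itemize}
So every resulting basis vector is strictly smaller in the measure.

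\emph{Second branch.} Here \(Q=RL(-2)^2\mathbf1\) and \(R\) has all indices at most \(4\), since the first branch did not apply. The word \(RL(-4)\mathbf1\) has length \(r-1\). Its PBW normal form consists of monomials of length at most \(r-1\), as noted in Lemma~\ref{lem:An-remove-two-L2}, so the length coordinate drops.

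\emph{Exhaustiveness.} A basis vector with all indices at most \(4\) and at most one \(L(-2)\) is exactly some \(P_\lambda\) with \(\lambda\in\Lambda_1\). So the three branches cover every basis vector, and the recursion bottoms out at the \(P_\lambda\). Since each fixed total degree has only finitely many basis vectors and the measure is well-founded, the recursion tree is finite. Total degree is bounded by the initial degree plus nothing, because no branch raises the degree.

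\textbf{The congruence.} I would induct on the same measure, proving \(\pi_q(Q)\equiv_{A_1}\pi_q(\mathcal N_1(Q))\) for basis vectors.

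\emph{First branch.} The \(n=1\) case of Lemma~\ref{lem:An-omega-high-index}, applied in \(U=V_q\) with \(u=\pi_q(Q')\), gives
\[
\pi_q(L(-m)Q')\equiv -3\pi_q(L(-(m-1))Q')-3\pi_q(L(-(m-2))Q')-\pi_q(L(-(m-3))Q').
\]
Each word on the right equals its PBW normal form in \(\bar V\), hence also after applying \(\pi_q\). The induction hypothesis applies to each resulting basis vector, and linearity of \(\mathcal N_1\) finishes this case.

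\emph{Second branch.} The relation \(q\in I(q)\) gives \(\pi_q(RL(-2)^2\mathbf1)=2\pi_q(RL(-4)\mathbf1)\), which is in fact an equality in \(V_q\), not merely a congruence. Induction then applies to the normal form of \(RL(-4)\mathbf1\).

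\emph{Third branch.} This case is trivial, since \(\mathcal N_1(P_\lambda)=P_\lambda\).

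\textbf{Main obstacle.} The delicate point is making sure the measure strictly decreases in the first branch once PBW reordering mixes indices. The reordered word keeps all other factors and replaces \(m\) by \(m-j<m\), so its excess strictly drops whenever \(m\ge5\); if instead \(m-j\le4\), the excess still drops by \(m-4\ge1\). Commutator terms have smaller length. The one thing to confirm is that length is the leading coordinate and never increases, which is exactly the statement proved in Lemma~\ref{lem:An-remove-two-L2} and Lemma~\ref{lem:A1-length-order}.
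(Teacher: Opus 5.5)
Your proof is correct, and its structure matches the paper's: check each branch against the $n=1$ case of Lemma~\ref{lem:An-omega-high-index} and the relation $q=0$, induct on a well-founded measure, then extend linearly. The difference is the measure. The paper orders basis vectors by (total degree, PBW length). Every term produced by the first rule has total degree $|Q|-j$ with $j\in\{1,2,3\}$, because PBW reordering among negative modes $L(-a),L(-b)$ with $a,b\ge2$ preserves degree and introduces no central terms. The second rule keeps the degree and lowers the length.

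Your stated reason for rejecting this measure, that total degree does not decrease in the first branch, is mistaken. Replacing $L(-m)$ by $L(-(m-j))$ lowers $|Q|$ by exactly $j$, so your excess coordinate $\sum_k\max(i_k-4,0)$ is not needed. Your parenthetical ``equal excess but smaller total degree once $m-j\le4$'' is also wrong as written. You correct it yourself in your last paragraph: in that case the excess drops by $m-4\ge1$.

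Even so, your lexicographic measure (length, excess, degree) is well-founded and strictly decreases along every branch. Commutator terms lose a factor, the reordered word in the first branch has smaller excess, and the second rule lowers length. So both termination and the congruence go through. The paper's choice is simpler and coincides with the recursion order in the definition of $\mathcal N_1$ (``first by total degree and then by PBW length''). Your version buys nothing extra here, and it needs the length non-increase facts as its leading coordinate rather than using degree alone.
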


\begin{proof}
Let \(Q=L(-i_1)\cdots L(-i_r)\mathbf1\) be a PBW basis vector. After
every word on the right-hand side of the first rule is put into PBW
normal form, each resulting term has total degree strictly smaller than
\(|Q|\). The second rule preserves the total degree, but every term in
its PBW normal form has length strictly smaller than
\(\operatorname{len}(Q)\). Thus each repeated application either
decreases the total degree, or preserves the total degree and decreases
the PBW length. Since both are nonnegative integers, the process
terminates after finitely many steps. A terminal term involves only
\(L(-2),L(-3),L(-4)\) and contains at most one \(L(-2)\)-factor; it
therefore belongs to \(\mathcal P_1\). The uniqueness of the PBW
expansion also shows that the linear extension defining \(\mathcal N_1\)
is a well-defined linear map on \(\bar V\).

Passing to PBW normal form uses identities in \(\bar V\), and hence does
not change the image in \(V_q\). The first rule is the \(n=1\) case of
Lemma~\ref{lem:An-omega-high-index} with \(U=V_q\), so it
preserves the congruence class after projection to \(A_1(V_q)\). The
second rule uses the vacuum relation in \(V_q\): if
\(Q=R L(-2)^2\mathbf1\), then \(\pi_q(Rq)=0\), and consequently
\(\pi_q(Q)=2\pi_q(R L(-4)\mathbf1)\).
Induction first on total degree and then, for a fixed degree, on PBW
length now gives
\(\pi_q(Q)\equiv_{A_1}\pi_q(\mathcal N_1(Q))\).
\end{proof}

We first establish the three triangular families arising from the
translation relation. They handle every nonretained monomial \(P_\lambda\)
except \(P_{(\lambda_4,0,1)}\) with \(\lambda_4\ge2\).

\begin{lemma}
\label{lem:A1-translation-triangular}
Suppose that \(\lambda\in\Lambda_1\) has one of the following three forms:
\(\lambda=(\lambda_4,\lambda_3,0)\) with \(\lambda_3>0\),
\(\lambda=(\lambda_4,0,0)\) with \(\lambda_4>0\), or
\(\lambda=(\lambda_4,\lambda_3,1)\) with \(\lambda_3>0\).
Then there exists
\(\ell_\lambda\in\mathcal P_{<P_\lambda}\) such that
\(P_\lambda\equiv_{A_1}\ell_\lambda\).
\end{lemma}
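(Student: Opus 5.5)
The plan is to apply, in each of the three cases, the translation relation \((L(-1)+L(0))\xi\in O_1(V_q)\) of Lemma~\ref{lem:higher-Zhu-On-relations} to a suitable PBW monomial \(\xi\in\bar V\). The monomial \(\xi\) is chosen so that \(L(-1)\xi\) contains \(P_\lambda\) with an invertible coefficient. We then apply \(\mathcal N_1\) and invoke Lemma~\ref{lem:A1-normalization-map} to pass to \(\mathcal P_1\) modulo \(O_1(V_q)\). It remains to check that every other term lands in \(\mathcal P_{<P_\lambda}\).

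The basic tool is the commutator \([L(-1),L(-m)]=(m-1)L(-(m+1))\) together with \(L(-1)\mathbf1=0\). Moving \(L(-1)\) through a monomial replaces one factor at a time: \(L(-2)\mapsto L(-3)\) with coefficient \(1\), \(L(-3)\mapsto L(-4)\) with coefficient \(2\), and \(L(-4)\mapsto L(-5)\) with coefficient \(3\). No length is gained. Restoring PBW order produces only commutator terms of strictly smaller length. The \(L(-5)\)-terms are then rewritten by the \(m=5\) congruence
\[
L(-5)u\equiv-3L(-4)u-3L(-3)u-L(-2)u,
\]
which preserves length. The \(L(0)\)-term is simply \((\deg\xi)\,\xi\).

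The three choices of \(\xi\) are as follows.
\begin{enumerate}[label=(\roman*)]
\item For \(\lambda=(\lambda_4,\lambda_3,0)\) with \(\lambda_3>0\), take \(\xi=L(-4)^{\lambda_4}L(-3)^{\lambda_3-1}L(-2)\mathbf1\). Then \(P_\lambda\) appears with coefficient \(1\).
\item For \(\lambda=(\lambda_4,0,0)\) with \(\lambda_4>0\), take \(\xi=L(-4)^{\lambda_4-1}L(-3)\mathbf1\). Then \(P_\lambda\) appears with coefficient \(2\), which is invertible in characteristic \(7\).
\item For \(\lambda=(\lambda_4,\lambda_3,1)\) with \(\lambda_3>0\), take \(\xi=L(-4)^{\lambda_4}L(-3)^{\lambda_3-1}L(-2)^2\mathbf1\in\bar V\). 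The two \(L(-2)\)-factors each contribute \(L(-3)L(-2)\) up to the lower-length commutator \([L(-2),L(-3)]=L(-5)\), so \(P_\lambda\) appears with coefficient \(2\).
\end{enumerate}

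In case (iii) the verification is a pure length count. Every other term, after \(\mathcal N_1\), has length at most \(\lambda_3+\lambda_4\). This holds because the \(q\)-rule trades \(L(-2)^2\) for a single \(L(-4)\), the \(L(-5)\)-rule preserves length, and normalization never increases length (Lemma~\ref{lem:A1-length-order}). Examples are \(\xi\) itself, the terms \(L(-4)^{\lambda_4+1}L(-3)^{\lambda_3-2}L(-2)^2\mathbf1\), and the \(L(-5)\)-replacements. The second part of Lemma~\ref{lem:A1-length-order} then places all of them in \(\mathcal P_{<P_\lambda}\).

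In cases (i) and (ii) the target \(P_\lambda\) has no \(L(-2)\)-factor, so it shares its length with competing terms. Here the ordering \(\nu\) must be checked directly. In case (i), every competing term of full length \(\lambda_3+\lambda_4\) either contains an \(L(-2)\)-factor, so its first \(\nu\)-coordinate is at most \(\lambda_3+\lambda_4-1\), or arises from the \(-3L(-4)\) or \(-3L(-3)\) replacement of an \(L(-5)\). Such replacements either keep the total at \(\lambda_3+\lambda_4\) while retaining the \(L(-2)\), hence are again smaller in the first coordinate, or they are handled as in case (ii). In case (ii), the competitors are \(\xi\) with \(\nu=(\lambda_4,0,-1)\), together with \(-3L(-4)^{\lambda_4-1}L(-3)\mathbf1\), \(-3L(-4)^{\lambda_4-2}L(-3)^2\mathbf1\), and \(-L(-4)^{\lambda_4-2}L(-3)L(-2)\mathbf1\). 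Their \(\nu\)-values are \((\lambda_4,0,-1)\), \((\lambda_4,0,-2)\), and \((\lambda_4-1,1,\ast)\), all below \((\lambda_4,0,0)\). Lower-length terms fall in the first coordinate.

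The main obstacle is this bookkeeping in cases (i) and (ii). I must make sure that PBW reordering after an \(L(-5)\)-replacement, such as moving a new \(L(-4)\) or \(L(-3)\) past other factors, or inserting an \(L(-2)\) that might collide with an existing one, never regenerates \(P_\lambda\) itself or a term with equal or larger \(\nu\). I would handle this by tracking only the multiset of negative modes at full length, where reordering is harmless, and by relegating all commutator terms to strictly smaller length.
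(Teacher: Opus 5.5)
Your proposal is correct and is essentially the paper's proof. It uses the same three auxiliary vectors \(P_{(\lambda_4,\lambda_3-1,1)}\), \(P_{(\lambda_4-1,1,0)}\), and \(L(-4)^{\lambda_4}L(-3)^{\lambda_3-1}L(-2)^2\mathbf1\) in the translation relation, with the same leading coefficients \(1,2,2\), a direct \(\nu\)-comparison in the first two cases, and a pure length count via Lemma~\ref{lem:A1-length-order} in the third. The only loose spot is the phrase ``handled as in case (ii)'' in case (i), which is not needed: every full-length non-target term there keeps the original \(L(-2)\)-factor, and the \(-L(-2)\) branch of the \(L(-5)\)-replacement becomes shorter after pair elimination.
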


\begin{proof}
We treat the three forms in order. Empty sums in the following
calculations are understood to be zero.

First consider
\(\lambda=(\lambda_4,\lambda_3,0)\) with \(\lambda_3>0\). To obtain the
target term \(P_{(\lambda_4,\lambda_3,0)}\), take the monomial
\(P_{(\lambda_4,\lambda_3-1,1)}\). When \(L(-1)\) acts on its
\(L(-2)\)-factor, the commutator
\([L(-1),L(-2)]=L(-3)\) produces precisely the target term. For
\(\lambda_3\ge1\), set
\(X=L(-4)^{\lambda_4}L(-3)^{\lambda_3-1}L(-2)\) and expand
\((L(-1)+L(0))P_{(\lambda_4,\lambda_3-1,1)}\):
\[
\begin{aligned}
(L(-1)+L(0))P_{(\lambda_4,\lambda_3-1,1)}
={}&(L(-1)+L(0))X\mathbf1\\
={}&L(-1)X\mathbf1+L(0)X\mathbf1\\
={}&[L(-1),X]\mathbf1
+(4\lambda_4+3\lambda_3-1)P_{(\lambda_4,\lambda_3-1,1)}\\
={}&L(-4)^{\lambda_4}L(-3)^{\lambda_3-1}[L(-1),L(-2)]\mathbf1\\
&+3\sum_{j=0}^{\lambda_4-1}
L(-4)^jL(-5)L(-4)^{\lambda_4-1-j}L(-3)^{\lambda_3-1}L(-2)\mathbf1\\
&+2\sum_{j=0}^{\lambda_3-2}
L(-4)^{\lambda_4}L(-3)^jL(-4)L(-3)^{\lambda_3-2-j}L(-2)\mathbf1\\
&+(4\lambda_4+3\lambda_3-1)P_{(\lambda_4,\lambda_3-1,1)}.
\end{aligned}
\]
The second equality expands the action of \(L(-1)+L(0)\). The third uses
\(L(-1)X\mathbf1=[L(-1),X]\mathbf1\) because
\(L(-1)\mathbf1=0\). The \(L(0)\)-term on the same line is the
weight \(4\lambda_4+3\lambda_3-1\) of the initial vector times that
vector. We then apply the Leibniz rule
\([A,BC]=[A,B]C+B[A,C]\) and
\([L(-1),L(-m)]=(m-1)L(-m-1)\) to the individual factors of \(X\). In
particular,
\(L(-4)^{\lambda_4}L(-3)^{\lambda_3-1}
[L(-1),L(-2)]\mathbf1=P_{(\lambda_4,\lambda_3,0)}\).
Every \(L(-5)\)-factor in the first sum is handled by the \(n=1\) case
of Lemma~\ref{lem:An-omega-high-index} with \(U=V_q\): for every
\(w\in V_q\), in \(A_1(V_q)\),
\[
L(-5)w\equiv_{A_1}
-3L(-4)w-3L(-3)w-L(-2)w.
\]
After applying \(\mathcal N_1\) to the second and third sums, their
length-preserving parts can produce only terms of the forms
\(P_{(\lambda_4,\lambda_3-1,1)}\),
\(P_{(\lambda_4-1,\lambda_3,1)}\), and
\(P_{(\lambda_4+1,\lambda_3-2,1)}\).
Terms containing two \(L(-2)\)-factors decrease in length when the pair
is eliminated, and commutator terms arising from PBW reordering are also
shorter. Here and below, a term \(P_\mu\) is understood to be zero if
one of the coordinates of \(\mu\) is negative. All three terms, as well as the last
term \(P_{(\lambda_4,\lambda_3-1,1)}\), are lower than
\(P_{(\lambda_4,\lambda_3,0)}\). Hence there exists
\(\ell_{(\lambda_4,\lambda_3,0)}
\in\mathcal P_{<P_{(\lambda_4,\lambda_3,0)}}\) such that
\begin{equation}\label{eq:A1-tri-lambda2-zero-lambda3-positive}
\mathcal N_1\bigl((L(-1)+L(0))P_{(\lambda_4,\lambda_3-1,1)}\bigr)
=P_{(\lambda_4,\lambda_3,0)}
-\ell_{(\lambda_4,\lambda_3,0)}
\qquad(\lambda_3\ge1,\ \lambda_4\ge0).
\end{equation}
Since
\(\pi_q((L(-1)+L(0))P_{(\lambda_4,\lambda_3-1,1)})
\in O_1(V_q)\), Lemma~\ref{lem:A1-normalization-map} and
\eqref{eq:A1-tri-lambda2-zero-lambda3-positive} give
\(P_{(\lambda_4,\lambda_3,0)}
\equiv_{A_1}\ell_{(\lambda_4,\lambda_3,0)}\)
in \(A_1(V_q)\). This proves the first case.

Next consider \(\lambda=(\lambda_4,0,0)\) with \(\lambda_4>0\). To obtain
the target term \(P_{(\lambda_4,0,0)}\), take
\(P_{(\lambda_4-1,1,0)}\). Its \(L(-3)\)-factor produces the target term
with coefficient \(2\) under
\([L(-1),L(-3)]=2L(-4)\). For \(\lambda_4\ge1\),
\[
\begin{aligned}
(L(-1)+L(0))P_{(\lambda_4-1,1,0)}
={}&2P_{(\lambda_4,0,0)}
+3\sum_{j=0}^{\lambda_4-2}
L(-4)^jL(-5)L(-4)^{\lambda_4-2-j}L(-3)\mathbf1\\
&+(4\lambda_4-1)P_{(\lambda_4-1,1,0)}.
\end{aligned}
\]
After \(\mathcal N_1\) is applied to the \(L(-5)\)-terms in the sum, the
length-preserving part can produce only
\(P_{(\lambda_4-1,1,0)}\), \(P_{(\lambda_4-2,2,0)}\), and
\(P_{(\lambda_4-2,1,1)}\),
whereas commutator terms arising from PBW reordering have smaller length.
These terms, together with the final
\(P_{(\lambda_4-1,1,0)}\), are all strictly lower than
\(P_{(\lambda_4,0,0)}\). Thus there exists
\(\ell_{(\lambda_4,0,0)}
\in\mathcal P_{<P_{(\lambda_4,0,0)}}\) such that
\begin{equation}\label{eq:A1-tri-lambda2-zero-lambda3-zero}
\mathcal N_1\bigl((L(-1)+L(0))P_{(\lambda_4-1,1,0)}\bigr)
=2P_{(\lambda_4,0,0)}-2\ell_{(\lambda_4,0,0)}
\qquad(\lambda_4\ge1).
\end{equation}
Since
\(\pi_q((L(-1)+L(0))P_{(\lambda_4-1,1,0)})\in O_1(V_q)\),
Lemma~\ref{lem:A1-normalization-map} and
\eqref{eq:A1-tri-lambda2-zero-lambda3-zero} give
\(P_{(\lambda_4,0,0)}
\equiv_{A_1}\ell_{(\lambda_4,0,0)}\)
in \(A_1(V_q)\). This proves the second case.

Finally, consider
\(\lambda=(\lambda_4,\lambda_3,1)\) with \(\lambda_3>0\). To obtain the
target term \(P_{(\lambda_4,\lambda_3,1)}\), use the auxiliary vector on
the left-hand side below, which contains two \(L(-2)\)-factors. The action
of \(L(-1)\) on these two factors gives the leading term
\(2P_{(\lambda_4,\lambda_3,1)}\). Expanding the action of
\(L(-1)+L(0)\) gives
\[
\begin{aligned}
&(L(-1)+L(0))L(-4)^{\lambda_4}
L(-3)^{\lambda_3-1}L(-2)^2\mathbf1\\
={}&L(-4)^{\lambda_4}L(-3)^{\lambda_3-1}
\bigl([L(-1),L(-2)]L(-2)+L(-2)[L(-1),L(-2)]\bigr)\mathbf1\\
&+3\sum_{j=0}^{\lambda_4-1}
L(-4)^jL(-5)L(-4)^{\lambda_4-1-j}
L(-3)^{\lambda_3-1}L(-2)^2\mathbf1\\
&+2\sum_{j=0}^{\lambda_3-2}
L(-4)^{\lambda_4}L(-3)^jL(-4)
L(-3)^{\lambda_3-2-j}L(-2)^2\mathbf1\\
&+(4\lambda_4+3\lambda_3+1)
L(-4)^{\lambda_4}L(-3)^{\lambda_3-1}L(-2)^2\mathbf1.
\end{aligned}
\]
Set \(d=\lambda_3+\lambda_4\). Using
\(L(-2)L(-3)=L(-3)L(-2)+L(-5)\),
the word containing the two commutator terms on the right-hand side is
the sum of \(2P_{(\lambda_4,\lambda_3,1)}\) and
\(L(-4)^{\lambda_4}L(-3)^{\lambda_3-1}L(-5)\mathbf1\).
The latter word has length \(d\), which does not increase under PBW
normalization or the action of \(\mathcal N_1\). Both sums and the final
\(L(0)\)-weight term contain two \(L(-2)\)-factors. Under
\(\mathcal N_1\), either PBW reordering first decreases the length, or
the pair of \(L(-2)\)-factors is eliminated and the length decreases from
\(d+1\) to at most \(d\). By the second assertion of
Lemma~\ref{lem:A1-length-order}, all nonleading terms above belong to
\(\mathcal P_{<P_{(\lambda_4,\lambda_3,1)}}\). There is therefore an
\(\ell_{(\lambda_4,\lambda_3,1)}
\in\mathcal P_{<P_{(\lambda_4,\lambda_3,1)}}\) such that
\begin{equation}\label{eq:A1-tri-lambda2-one-lambda3-positive}
\begin{aligned}
&\mathcal N_1\Bigl((L(-1)+L(0))L(-4)^{\lambda_4}
L(-3)^{\lambda_3-1}L(-2)^2\mathbf1\Bigr)\\
&\qquad=2P_{(\lambda_4,\lambda_3,1)}
-2\ell_{(\lambda_4,\lambda_3,1)}
\qquad(\lambda_3\ge1,\ \lambda_4\ge0).
\end{aligned}
\end{equation}
Since
\[
\pi_q\bigl((L(-1)+L(0))L(-4)^{\lambda_4}
L(-3)^{\lambda_3-1}L(-2)^2\mathbf1\bigr)\in O_1(V_q),
\]
Lemma~\ref{lem:A1-normalization-map} and
\eqref{eq:A1-tri-lambda2-one-lambda3-positive} give
\(P_{(\lambda_4,\lambda_3,1)}
\equiv_{A_1}\ell_{(\lambda_4,\lambda_3,1)}\)
in \(A_1(V_q)\). This proves the third case.
\end{proof}

We next determine the leading term of the circle-product relation for
the remaining monomials \(P_{(\lambda_4,0,1)}\).

\begin{lemma}
\label{lem:A1-pure-L4-leading}
For \(\lambda_4\ge2\), let \(u=L(-4)L(-2)\mathbf1\) and
\(t_{\lambda_4}=L(-4)^{\lambda_4-2}L(-2)\mathbf1\).
Then there exists
\(\ell_{(\lambda_4,0,1)}
\in\mathcal P_{<P_{(\lambda_4,0,1)}}\) such that
\begin{equation}\label{eq:A1-tri-lambda2-one-lambda3-zero}
\mathcal N_1(u\circ^0_{1,0}t_{\lambda_4})
=2P_{(\lambda_4,0,1)}-2\ell_{(\lambda_4,0,1)}.
\end{equation}
\end{lemma}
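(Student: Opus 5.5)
The plan is to expand \(u\circ^0_{1,0}t_{\lambda_4}\) explicitly and then track only the top-length part of its image under \(\mathcal N_1\).

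The starting observation is that \(\operatorname{wt}(u)=6\), so the defining factor is \((1+z)^{\operatorname{wt}(u)+1}/z^{4}=(1+z)^7/z^4\). In characteristic \(7\), Frobenius gives \((1+z)^7=1+z^7\), hence
\[
u\circ^0_{1,0}t_{\lambda_4}=u_3t_{\lambda_4}+u_{-4}t_{\lambda_4}.
\]
This collapse of the binomial sum is what makes the circle product usable here. Note that \(\operatorname{wt}(t_{\lambda_4})=4\lambda_4-6\). Thus \(u_{-4}t_{\lambda_4}\) has degree \(4\lambda_4+3\), and \(u_3t_{\lambda_4}\) has degree \(4\lambda_4-4\).

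Next I would write \(u=\omega_{-3}\omega\), since \(L(-4)=\omega_{-3}\). The iterate formula of~\citep{DR14} then expresses each mode \(u_k\) as a normally ordered quadratic expression. Its terms are \(\sum_{a+b=\text{const}}c_{a,b}L(a)L(b)\) with explicit binomial coefficients, together with linear terms in single modes \(L(\cdot)\) coming from the commutator corrections. When acting on \(t_{\lambda_4}\), whose length is \(\lambda_4-1\), the first assertion of Lemma~\ref{lem:A1-length-order} gives the following length bounds.
\begin{itemize}[label={}]
\item Any term in which some mode \(L(p)\) with \(p\ge-1\) hits \(t_{\lambda_4}\) has length at most \(\lambda_4\).
\item The linear corrections also have length at most \(\lambda_4\).
\end{itemize}
Such terms, once normalized, either have length \(\le\lambda_4\) and \(\lambda_2=0\), or are strictly shorter. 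By the second assertion of Lemma~\ref{lem:A1-length-order} they lie in \(\mathcal P_{<P_{(\lambda_4,0,1)}}\). Applying the same bound to \(u_3t_{\lambda_4}\) places it, in its entirety, in the lower span.

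The only potentially non-lower contributions are the terms \(L(-a)L(-b)t_{\lambda_4}\) with \(a,b\ge2\) and \(a+b=9\). These have length \(\lambda_4+1\). Under \(\mathcal N_1\) their degree is lowered by the rule \(L(-m)\mapsto-3L(-(m-1))-3L(-(m-2))-L(-(m-3))\), which preserves length, and any resulting \(L(-2)^2\) pair is replaced by \(2L(-4)\), dropping the length.

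I would organize this bookkeeping as follows. For each length-\((\lambda_4+1)\) word, the \(\mathcal N_1\)-image is a sum of monomials in \(L(-2),L(-3),L(-4)\). Modulo shorter terms, the length-preserving part of that image is governed by reordering-free multiplication of commuting symbols. Hence the top-length part can be computed as a polynomial identity: I would reduce \(x^a\) modulo \(x^3+3x^2+3x+1\), that is, \((x+1)^3\), in the variables encoding \(L(-4),L(-3),L(-2)\).

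I then expect the length-\((\lambda_4+1)\) outcome to be a combination of three kinds of monomials:
\begin{itemize}[label={}]
\item \(P_{(\lambda_4,0,1)}\) itself, whose coefficient should come out to be \(2\);
\item monomials \(P_{(\lambda_4-j,j,1)}\) with \(j>0\), which are lower because the third \(\nu\)-coordinate is \(-j\);
\item monomials of length \(\lambda_4+1\) with \(\lambda_2=0\), i.e. words purely in \(L(-3)\) and \(L(-4)\), whose first \(\nu\)-coordinate \(\lambda_4+1\) would be higher.
\end{itemize}

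The main obstacle is precisely this last kind. I must show that the coefficients of all length-\((\lambda_4+1)\) monomials with no \(L(-2)\)-factor vanish. Equivalently, in degree \(4\lambda_4+3\), after reduction of indices, the combination must retain an \(L(-2)\)-factor. A word of this kind must contain an odd-index mode, since \(4\lambda_4+3\) is odd, and I would try first to show that the symmetric coefficients \(c_{a,b}\), taken mod \(7\), together with the reduction polynomial \((x+1)^3\), make the pure \(L(-3)/L(-4)\) parts cancel. Concretely, I would compute the top-length image of the pair-sum as a polynomial in two variables and verify the cancellation and the coefficient \(2\) directly, using \(7=0\).

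If the cancellation fails, the fallback is to subtract suitable multiples of the relations from Lemma~\ref{lem:A1-translation-triangular}. That would change \eqref{eq:A1-tri-lambda2-one-lambda3-zero} only by terms already known to be reducible, but it would not give the statement exactly as written, so the direct computation is the intended route.
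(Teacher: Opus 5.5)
Your setup matches the paper's: the Frobenius collapse \(u\circ^0_{1,0}t_{\lambda_4}=u_{-4}t_{\lambda_4}+u_3t_{\lambda_4}\), the iterate formula for \(u=\omega_{-3}\omega\), and length bounds. Those bounds put \(u_3t_{\lambda_4}\), and every word in which a mode \(L(p)\) with \(p\ge-1\) hits \(t_{\lambda_4}\), into \(\mathcal P_{<P_{(\lambda_4,0,1)}}\).

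The gap is that you stop where the content of the lemma begins. The coefficient \(2\) is only ``expected''. The exclusion of higher terms is left to a hoped-for mod-\(7\) cancellation, with a fallback that by your own account would not give the statement.

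The missing computation is short. With \(c_i=(-1)^i\binom{-3}{i}\), the only words of length \(\lambda_4+1\) are \(c_iL(-4-i)L(i-5)t_{\lambda_4}\) for \(0\le i\le3\). These are \(L(-4)L(-5)t_{\lambda_4}\), \(3L(-5)L(-4)t_{\lambda_4}\), \(6L(-6)L(-3)t_{\lambda_4}\) and \(3L(-7)L(-2)t_{\lambda_4}\).
\begin{itemize}
\item The last contains two \(L(-2)\)-factors, so its length drops.
\item The third keeps its \(L(-3)\)- and \(L(-2)\)-factors on every length-preserving branch, so it yields only \(P_\mu\) with \(\mu_3>0\).
\item The first two combine to \(4L(-5)L(-4)t_{\lambda_4}+L(-9)t_{\lambda_4}\). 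The \(L(-5)\)-rule then gives the target coefficient \(4\cdot(-3)=-12=2\) in \(F\).
\end{itemize}

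Your ``main obstacle'' (top-length monomials with no \(L(-2)\)-factor) cannot arise, and no cancellation is needed to rule it out. Every length-\((\lambda_4+1)\) word contains the \(L(-2)\)-factor of \(t_{\lambda_4}\). Along a length-preserving branch of \(\mathcal N_1\), the number of \(L(-2)\)-factors can only grow, for three reasons:
\begin{itemize}
\item the high-index rule alters only modes \(L(-m)\) with \(m\ge5\);
\item PBW reordering preserves the multiset of modes;
\item pair elimination lowers the length.
\end{itemize}
Hence every top-length \(P_\mu\) has \(\mu_2=1\) and \(\mu_3+\mu_4=\lambda_4\). It is therefore either \(P_{(\lambda_4,0,1)}\) itself or lower in the third \(\nu\)-coordinate. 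The paper's case-by-case treatment of \(i=0,1,2,3\) rests on exactly this persistence.

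Your parity heuristic (``a word of this kind must contain an odd-index mode'') is also unusable, because \(\mathcal N_1\) lowers degree. The target \(L(-4)^{\lambda_4}L(-2)\mathbf1\) itself has even degree \(4\lambda_4+2\) and no odd-index mode.
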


\begin{proof}
Fix \(\lambda_4\ge2\) and calculate
\(u\circ^0_{1,0}t_{\lambda_4}\) in \(\bar V\). Since
\(\operatorname{wt}(u)=6\), the identity
\((1+z)^7=1+z^7\) holds in characteristic \(7\). Hence
\[
u\circ^0_{1,0}t_{\lambda_4}
=
\operatorname{Res}_zY(u,z)t_{\lambda_4}\,
\frac{(1+z)^7}{z^4}
=u_{-4}t_{\lambda_4}+u_3t_{\lambda_4}.
\]
Here \(u=\omega_{-3}\omega\). The mode iterate formula obtained from the
Jacobi identity in the arbitrary-field setting~\citep{DR14} is
\[
(\omega_{-3}\omega)_k
=
\sum_{i\ge0}(-1)^i\binom{-3}{i}
\bigl(\omega_{-3-i}\omega_{k+i}
{}+\omega_{k-3-i}\omega_i\bigr).
\]
Since \(\omega_j=L(j-1)\), taking \(k=-4\) and \(k=3\), respectively,
gives
\[
\begin{aligned}
u_{-4}t_{\lambda_4}
={}&\sum_{i\ge0}(-1)^i\binom{-3}{i}
\Bigl(L(-4-i)L(i-5)t_{\lambda_4}
+L(-8-i)L(i-1)t_{\lambda_4}\Bigr),\\
u_3t_{\lambda_4}
={}&\sum_{i\ge0}(-1)^i\binom{-3}{i}
\Bigl(L(-4-i)L(i+2)t_{\lambda_4}
+L(-1-i)L(i-1)t_{\lambda_4}\Bigr).
\end{aligned}
\]
By the lower truncation property, only finitely many summands remain
nonzero after acting on the fixed vector \(t_{\lambda_4}\). For the
present target \(P_{(\lambda_4,0,1)}\), the space
\(\mathcal P_{<P_{(\lambda_4,0,1)}}\) is precisely the span of all
\(P_{(\mu_4,\mu_3,0)}\) with
\(\mu_3+\mu_4\le\lambda_4\) and all \(P_{(\mu_4,\mu_3,1)}\) with
\(\mu_3,\mu_4\ge0\),
\(\mu_3+\mu_4\le\lambda_4\), and
\((\mu_3,\mu_4)\ne(0,\lambda_4)\).

Denote the two sums in \(u_{-4}t_{\lambda_4}\) by
\[
S_{\lambda_4}^{(1)}=\sum_{i\ge0}(-1)^i\binom{-3}{i}L(-4-i)L(i-5)t_{\lambda_4},
\qquad
S_{\lambda_4}^{(2)}=\sum_{i\ge0}(-1)^i\binom{-3}{i}L(-8-i)L(i-1)t_{\lambda_4}.
\]
By definition of \(\nu\), the ordering parameter of
\(P_{(\lambda_4,0,1)}\) is \((\lambda_4,1,0)\). For \(P_\mu\), each of
the following conditions implies
\(P_\mu\in\mathcal P_{<P_{(\lambda_4,0,1)}}\): its first ordering
coordinate is smaller than \(\lambda_4\); its first ordering coordinate
equals \(\lambda_4\) and \(\mu_2=0\); or its first ordering coordinate
equals \(\lambda_4\), \(\mu_2=1\), and \(\mu_3>0\). We calculate
\(S_{\lambda_4}^{(1)}\), \(S_{\lambda_4}^{(2)}\), and
\(u_3t_{\lambda_4}\) in turn. The calculation gives coefficient \(2\)
for the target term and places every remaining term in
\(\mathcal P_{<P_{(\lambda_4,0,1)}}\).

To expand \(S_{\lambda_4}^{(1)}\), set \(r=\lambda_4-2\) and
\(c_i=(-1)^i\binom{-3}{i}\).
Move \(L(i-5)\) successively past the \(r\) copies of \(L(-4)\) and the
final \(L(-2)\), using
\[
[L(i-5),L(-4)]=(i-1)L(i-9)+6\delta_{i,9},\qquad
[L(i-5),L(-2)]=(i-3)L(i-7)+2\delta_{i,7}.
\]
Here \(\delta_{i,j}\) denotes the Kronecker delta: it equals \(1\) when
\(i=j\) and \(0\) otherwise. The coefficients of the central terms have
already been evaluated at the central charge \(c=4\) and simplified in
\(F\). We obtain
\[
\begin{aligned}
S_{\lambda_4}^{(1)}
={}&\sum_{i\ge0}c_iL(-4-i)L(i-5)
L(-4)^rL(-2)\mathbf1\\
={}&\sum_{i\ge0}\sum_{j=0}^{r-1}c_iL(-4-i)L(-4)^j
 \bigl((i-1)L(i-9)+6\delta_{i,9}\bigr)
 L(-4)^{r-1-j}L(-2)\mathbf1\\
&\quad{}+\sum_{i\ge0}c_iL(-4-i)L(-4)^r
 \bigl((i-3)L(i-7)+2\delta_{i,7}\bigr)\mathbf1\\
&\quad{}+\sum_{i=0}^{3}c_iL(-4-i)L(-4)^rL(-2)L(i-5)\mathbf1.
\end{aligned}
\]
For \(r=0\), the inner sum over \(j\) is understood to be empty. Only
\(0\le i\le3\) is required in the last sum, since
\(L(i-5)\mathbf1=0\) for \(i\ge4\). Recall that neither
\(\mathcal N_1\) nor PBW normalization increases PBW length:
Lemma~\ref{lem:An-omega-high-index} replaces one negative mode by one
negative mode, a commutator term in PBW reordering decreases the length,
and the elimination of a pair of \(L(-2)\)-factors replaces two negative
modes by one.

The first two sums in the second equality are the commutator
contributions. The commutator of \(L(i-5)\) with one of the \(r\)
copies of \(L(-4)\)
produces words of the form
\[
L(-4-i)L(-4)^jL(i-9)
L(-4)^{r-1-j}L(-2)\mathbf1,
\qquad 0\le j\le r-1.
\]
When \(L(i-9)\) is a negative mode, such a word has length
\(r+2=\lambda_4\). When it is a nonnegative mode, move it to the right
and let it act on the vacuum vector. The first assertion of
Lemma~\ref{lem:A1-length-order} shows that the resulting PBW monomials
have no more negative-mode factors, while the central term arising from
\(\delta_{i,9}\) has smaller length.

The commutator of \(L(i-5)\) with the final \(L(-2)\) produces words of
the form \(L(-4-i)L(-4)^rL(i-7)\mathbf1\).
If \(L(i-7)\) is a negative mode, the length is
\(r+2=\lambda_4\); if it is nonnegative, move it to the right and let it
act on the vacuum vector. By the first assertion of
Lemma~\ref{lem:A1-length-order}, every resulting term has length at most
\(\lambda_4\), while the central term arising from \(\delta_{i,7}\) has
smaller length. It follows that every \(P_\mu\) obtained from these
words after applying \(\mathcal N_1\) satisfies
\(\operatorname{len}(P_\mu)\le\lambda_4\).
By Lemma~\ref{lem:A1-length-order}, all such \(P_\mu\) belong to
\(\mathcal P_{<P_{(\lambda_4,0,1)}}\).

It remains to consider
\(c_iL(-4-i)L(-4)^rL(-2)L(i-5)\mathbf1\),
because only these words can have length \(\lambda_4+1\). For \(i\ge2\),
only \(i=2,3\) occur. When \(i=3\), the corresponding word is
\(c_3L(-7)L(-4)^rL(-2)^2\mathbf1\).
After eliminating the pair of \(L(-2)\)-factors, its length is at most
\(\lambda_4\). Further applications of \(\mathcal N_1\) do not increase
the length, so Lemma~\ref{lem:A1-length-order} places all resulting terms
in \(\mathcal P_{<P_{(\lambda_4,0,1)}}\).

For \(i=2\), first interchange the final \(L(-2)\) and \(L(-3)\):
\[
c_2L(-6)L(-4)^rL(-2)L(-3)\mathbf1
=c_2L(-6)L(-4)^rL(-3)L(-2)\mathbf1
+c_2L(-6)L(-4)^rL(-5)\mathbf1.
\]
The second term on the right has length \(r+2=\lambda_4\). Since PBW
normalization and \(\mathcal N_1\) do not increase the length,
Lemma~\ref{lem:A1-length-order} shows that all terms obtained from it
belong to \(\mathcal P_{<P_{(\lambda_4,0,1)}}\). Applying the first rule
of \(\mathcal N_1\) to the first term gives
\[
\begin{aligned}
\mathcal N_1\bigl(c_2L(-6)L(-4)^rL(-3)L(-2)\mathbf1\bigr)
={}&-3c_2\mathcal N_1\bigl(L(-5)L(-4)^rL(-3)L(-2)\mathbf1\bigr)\\
&-3c_2\mathcal N_1\bigl(
L(-4)^{r+1}L(-3)L(-2)\mathbf1\bigr)\\
&-c_2\mathcal N_1\bigl(
L(-3)L(-4)^rL(-3)L(-2)\mathbf1\bigr).
\end{aligned}
\]
Every term on the right arising from a commutator or from the elimination
of a pair of \(L(-2)\)-factors has length at most \(\lambda_4\), and hence
belongs to \(\mathcal P_{<P_{(\lambda_4,0,1)}}\) by
Lemma~\ref{lem:A1-length-order}. Every remaining term of length
\(\lambda_4+1\) contains exactly one \(L(-2)\)-factor and at least one
\(L(-3)\)-factor. Along any branch on which the length remains
\(\lambda_4+1\), neither a commutator term from PBW reordering nor the
elimination of a pair of \(L(-2)\)-factors can occur, since either
operation lowers the length. The high-index recurrence replaces one
mode by one mode, and its length-preserving PBW branch retains the
pre-existing \(L(-3)\)-factor. Hence each monomial \(P_\mu\) obtained
from these terms under \(\mathcal N_1\) satisfies
\(\mu_3+\mu_4=\lambda_4\), \(\mu_2=1\), and \(\mu_3>0\).

In summary, every monomial \(P_\mu\) obtained from the terms with
\(i\ge2\) either satisfies
\(\operatorname{len}(P_\mu)\le\lambda_4\), or satisfies
\(\operatorname{len}(P_\mu)=\lambda_4+1\), \(\mu_2=1\), and
\(\mu_3>0\).
The former belongs to \(\mathcal P_{<P_{(\lambda_4,0,1)}}\) by
Lemma~\ref{lem:A1-length-order}; in the latter case,
\(\nu(P_\mu)=(\lambda_4,1,-\mu_3)<(\lambda_4,1,0)\),
which gives the same conclusion. Therefore only the terms with
\(i=0,1\) can contribute the target term
\(P_{(\lambda_4,0,1)}\). In \(S_{\lambda_4}^{(1)}\), these two terms are
\(c_0L(-4)L(-5)t_{\lambda_4}\) and
\(c_1L(-5)L(-4)t_{\lambda_4}\).

We now calculate the total coefficient of the target term contributed by
these two summands. Since
\(c_i=(-1)^i\binom{-3}{i}\), in \(F\) we have
\(c_0=(-1)^0\binom{-3}{0}=1\) and
\(c_1=(-1)^1\binom{-3}{1}=3\).
Moreover,
\[
c_0L(-4)L(-5)t_{\lambda_4}+c_1L(-5)L(-4)t_{\lambda_4}
=4L(-5)L(-4)t_{\lambda_4}+L(-9)t_{\lambda_4},
\]
where we used
\(L(-4)L(-5)=L(-5)L(-4)+L(-9)\). The word
\(L(-9)t_{\lambda_4}\) has length \(\lambda_4\), and
\(\mathcal N_1\) does not increase the length. By
Lemma~\ref{lem:A1-length-order}, it contributes only lower terms after
\(\mathcal N_1\) is applied. In the other term, apply the rule for
\(L(-5)\). Only
\(-3L(-4)L(-4)t_{\lambda_4}\) contributes the target term
\(P_{(\lambda_4,0,1)}\). Among the terms containing \(L(-3)\), the only
length-preserving term after PBW normalization is
\(P_{(\lambda_4-1,1,1)}\), whose third ordering coordinate is
\(-1<0\). Commutator terms from the reordering have smaller length and
are lower by Lemma~\ref{lem:A1-length-order}. Terms containing two
\(L(-2)\)-factors are likewise lower after the pair is eliminated.
Consequently,
\[
\mathcal N_1(S_{\lambda_4}^{(1)})
\equiv 4(-3)P_{(\lambda_4,0,1)}
=2P_{(\lambda_4,0,1)}
\pmod{\mathcal P_{<P_{(\lambda_4,0,1)}}}.
\]

We next consider \(S_{\lambda_4}^{(2)}\). Move \(L(i-1)\) successively
past the negative modes in
\(t_{\lambda_4}=L(-4)^rL(-2)\mathbf1\). The relations are
\[
[L(i-1),L(-4)]=(i+3)L(i-5)+6\delta_{i,5},\qquad
[L(i-1),L(-2)]=(i+1)L(i-3)+2\delta_{i,3},
\]
where the central terms have again been evaluated at \(c=4\) and
simplified in \(F\). Together with
\(L(i-1)\mathbf1=0\) for \(i\ge0\), these relations give
\[
\begin{aligned}
L(i-1)t_{\lambda_4}
={}&\sum_{j=0}^{r-1}L(-4)^j
\bigl((i+3)L(i-5)+6\delta_{i,5}\bigr)
L(-4)^{r-1-j}L(-2)\mathbf1\\
&+L(-4)^r\bigl((i+1)L(i-3)+2\delta_{i,3}\bigr)\mathbf1.
\end{aligned}
\]
After substitution into \(S_{\lambda_4}^{(2)}\), the noncentral terms
have only the following two forms:
\[
\begin{aligned}
&L(-8-i)L(-4)^jL(i-5)L(-4)^{r-1-j}L(-2)\mathbf1,
&&0\le j\le r-1,\\
&L(-8-i)L(-4)^rL(i-3)\mathbf1.
\end{aligned}
\]
When \(L(i-5)\) or \(L(i-3)\) is a negative mode, these words have length
\(r+2=\lambda_4\). When it is a nonnegative mode, moving it to the right
until it acts on the vacuum vector produces terms with no more
negative-mode factors, by the first assertion of
Lemma~\ref{lem:A1-length-order}; central terms have strictly smaller
length. Since a subsequent application of \(\mathcal N_1\) does not
increase PBW length, every monomial \(P_\mu\) occurring in
\(\mathcal N_1(S_{\lambda_4}^{(2)})\) has length at most
\(\lambda_4\). The second assertion of
Lemma~\ref{lem:A1-length-order} therefore gives
\[
\mathcal N_1(S_{\lambda_4}^{(2)})
\equiv0\pmod{\mathcal P_{<P_{(\lambda_4,0,1)}}}.
\]

Finally, consider \(u_3t_{\lambda_4}\). For a word
\(L(-4-i)L(i+2)t_{\lambda_4}\),
the positive mode \(L(i+2)\) acts to the right on \(t_{\lambda_4}\).
Since its action on the vacuum vector vanishes, only its commutators with
the negative modes in \(t_{\lambda_4}\) remain. Each noncentral
commutator replaces one negative mode by one Virasoro operator, while a
central term decreases the length further. Equivalently, the first
assertion of Lemma~\ref{lem:A1-length-order}, followed by the inclusion
of the outer \(L(-4-i)\), bounds the resulting lengths by
\(\operatorname{len}(t_{\lambda_4})+1=\lambda_4\).

Now consider \(L(-1-i)L(i-1)t_{\lambda_4}\).
For \(i=0\), the two \(L(-1)\)-operators act successively to the right and
vanish on the vacuum vector; the resulting words have length at most
\(\operatorname{len}(t_{\lambda_4})=\lambda_4-1\). For \(i=1\),
\(t_{\lambda_4}\) is homogeneous, so \(L(0)t_{\lambda_4}\) is a scalar
multiple of \(t_{\lambda_4}\); the outer \(L(-2)\) therefore increases
the length to at most \(\lambda_4\). For \(i\ge2\),
\(L(i-1)\) is a positive mode. As in the preceding case, moving it to the
right and then including the outer \(L(-1-i)\) again yields words of
length at most \(\lambda_4\), by the first assertion of
Lemma~\ref{lem:A1-length-order}. Hence every monomial \(P_\mu\) obtained
from \(u_3t_{\lambda_4}\) after PBW normalization and application of
\(\mathcal N_1\) has length at most \(\lambda_4\). By the second
assertion of Lemma~\ref{lem:A1-length-order},
\[
\mathcal N_1(u_3t_{\lambda_4})\equiv0
\pmod{\mathcal P_{<P_{(\lambda_4,0,1)}}}.
\]

Since
\(u_{-4}t_{\lambda_4}
=S_{\lambda_4}^{(1)}+S_{\lambda_4}^{(2)}\), the three congruences above
combine to give
\[
\mathcal N_1(u_{-4}t_{\lambda_4}+u_3t_{\lambda_4})
\equiv2P_{(\lambda_4,0,1)}
\pmod{\mathcal P_{<P_{(\lambda_4,0,1)}}}.
\]
Together with
\(u\circ^0_{1,0}t_{\lambda_4}
=u_{-4}t_{\lambda_4}+u_3t_{\lambda_4}\), and using the invertibility of
\(2\) in \(F\), this allows us to choose
\(\ell_{(\lambda_4,0,1)}
\in\mathcal P_{<P_{(\lambda_4,0,1)}}\) so that
\eqref{eq:A1-tri-lambda2-one-lambda3-zero} holds.
\end{proof}

The three translation families and the calculation for
\(P_{(\lambda_4,0,1)}\) now combine into a triangular relation for every
monomial \(P_\lambda\) outside the retained set.

\begin{theorem}
\label{thm:A1-PBW-triangular}
If \(\lambda\in\Lambda_1\setminus
\{(0,0,0),(0,0,1),(1,0,1)\}\),
then there exists
\(\ell_\lambda\in\mathcal P_{<P_\lambda}\) such that
\(P_\lambda\equiv_{A_1}\ell_\lambda\).
\end{theorem}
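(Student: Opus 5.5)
The proof is a case analysis on \(\lambda=(\lambda_4,\lambda_3,\lambda_2)\in\Lambda_1\) outside the retained set. Each case reduces to Lemma~\ref{lem:A1-translation-triangular} or Lemma~\ref{lem:A1-pure-L4-leading}.

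\emph{Case \(\lambda_2=0\).} Since \(\lambda\ne(0,0,0)\), either \(\lambda_3>0\) or \(\lambda_3=0\) with \(\lambda_4>0\). These are exactly the first two forms of Lemma~\ref{lem:A1-translation-triangular}, which supplies \(\ell_\lambda\in\mathcal P_{<P_\lambda}\).

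\emph{Case \(\lambda_2=1\) and \(\lambda_3>0\).} This is the third form of Lemma~\ref{lem:A1-translation-triangular}.

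\emph{Case \(\lambda_2=1\) and \(\lambda_3=0\).} Here \(\lambda=(\lambda_4,0,1)\). Since \((0,0,1)\) and \((1,0,1)\) are excluded, \(\lambda_4\ge2\), so Lemma~\ref{lem:A1-pure-L4-leading} applies with \(u=L(-4)L(-2)\mathbf1\) and \(t_{\lambda_4}=L(-4)^{\lambda_4-2}L(-2)\mathbf1\).

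In that case the vector \(u\circ^0_{1,0}t_{\lambda_4}\) is computed in \(\bar V\). Its image under \(\pi_q\) is \(\pi_q(u)\circ^0_{1,0}\pi_q(t_{\lambda_4})\), because \(\pi_q\) is a homomorphism of vertex operator algebras preserving \(\omega\). This image is a generator of \(O_1(V_q)\): \(u\) is homogeneous of weight \(6\), and \(s=t=0\) is in the allowed range. By Lemma~\ref{lem:A1-normalization-map},
\[
0\equiv_{A_1}\pi_q\bigl(u\circ^0_{1,0}t_{\lambda_4}\bigr)\equiv_{A_1}\pi_q\bigl(\mathcal N_1(u\circ^0_{1,0}t_{\lambda_4})\bigr).
\]
By \eqref{eq:A1-tri-lambda2-one-lambda3-zero} the right-hand side equals \(2P_{(\lambda_4,0,1)}-2\ell_{(\lambda_4,0,1)}\). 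Since \(2\) is invertible in characteristic \(7\), dividing by \(2\) gives \(P_{(\lambda_4,0,1)}\equiv_{A_1}\ell_{(\lambda_4,0,1)}\).

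The argument is mostly bookkeeping, since the real work is already done in the two lemmas. The one point to check is that the case split is exhaustive over \(\Lambda_1\setminus\{(0,0,0),(0,0,1),(1,0,1)\}\): the cases are determined by \(\lambda_2\in\{0,1\}\), by whether \(\lambda_3>0\), and by the range of \(\lambda_4\), and together they cover every such \(\lambda\). The other point is to note explicitly that the generator lies in \(O_1(V_q)\) after projection. No further reordering or length estimates are needed.
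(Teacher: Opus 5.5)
Your proposal is correct and follows the paper's proof step for step. It uses the same four-way case split. The three translation families go to Lemma~\ref{lem:A1-translation-triangular}. The residual family \((\lambda_4,0,1)\) with \(\lambda_4\ge2\) is handled by noting \(\pi_q(u\circ^0_{1,0}t_{\lambda_4})\in O_1(V_q)\), invoking Lemma~\ref{lem:A1-normalization-map} together with \eqref{eq:A1-tri-lambda2-one-lambda3-zero}, and dividing by the unit \(2\). Your explicit checks that the case split is exhaustive and that \((s,t)=(0,0)\) is an admissible index pair simply spell out what the paper asserts in passing.
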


\begin{proof}
Every \(\lambda\) satisfying the hypothesis has exactly one of four
forms. The first three forms are \((\lambda_4,\lambda_3,0)\) with
\(\lambda_3>0\), \((\lambda_4,0,0)\) with \(\lambda_4>0\), and
\((\lambda_4,\lambda_3,1)\) with \(\lambda_3>0\); they
are covered by Lemma~\ref{lem:A1-translation-triangular}.

The remaining case is
\(\lambda=(\lambda_4,0,1)\) with \(\lambda_4\ge2\). Take
\(u=L(-4)L(-2)\mathbf1\) and
\(t_{\lambda_4}=L(-4)^{\lambda_4-2}L(-2)\mathbf1\).
By Lemma~\ref{lem:A1-pure-L4-leading}, there exists
\(\ell_{(\lambda_4,0,1)}
\in\mathcal P_{<P_{(\lambda_4,0,1)}}\) such that
\eqref{eq:A1-tri-lambda2-one-lambda3-zero} holds. On the other hand, the
homomorphism property of \(\pi_q\) and the definition of \(O_1(V_q)\)
give \(\pi_q(u\circ^0_{1,0}t_{\lambda_4})\in O_1(V_q)\).
Applying \(\pi_q\) to
\eqref{eq:A1-tri-lambda2-one-lambda3-zero} and
using Lemma~\ref{lem:A1-normalization-map}, we obtain
\(0\equiv_{A_1}2P_{(\lambda_4,0,1)}-2\ell_{(\lambda_4,0,1)}\).
Since \(2\) is invertible in \(F\), it follows that
\(P_{(\lambda_4,0,1)}
\equiv_{A_1}\ell_{(\lambda_4,0,1)}\).
Thus the fourth form also satisfies the assertion.
\end{proof}

Iterating the preceding triangular relations along the order determined
by \(\nu\) gives the global three-dimensional spanning bound.

\begin{lemma}
\label{lem:A1-global-pbw-reduction}
The class of every vacuum PBW monomial in \(A_1(V_q)\) belongs to
\[
\operatorname{span}_F
\bigl\{[\mathbf1]_1,[L(-2)\mathbf1]_1,
[L(-4)L(-2)\mathbf1]_1\bigr\}.
\]
\end{lemma}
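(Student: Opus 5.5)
The plan has two stages. First, reduce an arbitrary vacuum PBW monomial to the family \(\{P_\lambda\}\). Then run a well-founded induction on \(\nu\) inside \(\mathcal P_1\).

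For the first stage, let \(Q\) be any vacuum PBW monomial in \(\bar V\). Lemma~\ref{lem:A1-normalization-map} gives \(\pi_q(Q)\equiv_{A_1}\pi_q(\mathcal N_1(Q))\), and \(\mathcal N_1(Q)\in\mathcal P_1\) is a finite \(F\)-linear combination of the \(P_\lambda\) with \(\lambda\in\Lambda_1\). Since \(\pi_q\) is surjective and every element of \(V_q\) is the image of a combination of PBW monomials, it suffices to prove the following claim: for every \(\lambda\in\Lambda_1\), the class \([P_\lambda]_1\) lies in \(\mathcal B:=\operatorname{span}_F\{[\mathbf1]_1,[L(-2)\mathbf1]_1,[L(-4)L(-2)\mathbf1]_1\}\).

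For the second stage, prove the claim by induction along \(\nu\). The ordering must be well-founded on \(\Lambda_1\), and this is the point that needs care, because the third coordinate \(-\lambda_3\) is unbounded below in \(\mathbb Z\). The fix is to observe that \(\nu(P_\lambda)=(d,\lambda_2,-\lambda_3)\) with \(d=\lambda_3+\lambda_4\) and \(0\le\lambda_3\le d\). Hence for each fixed first coordinate \(d\), only finitely many triples occur, and below any given \(\nu\)-value there are only finitely many \(P_\mu\). So the lexicographic order on \(\{\nu(P_\lambda)\}\) is well-founded (indeed of order type \(\omega\)), and strong induction is legitimate. Equivalently, one can induct on the finite set of \(\mu\) with \(\nu(P_\mu)\le\nu(P_\lambda)\).

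The induction step then runs as follows.
\begin{itemize}
\item If \(\lambda\in\{(0,0,0),(0,0,1),(1,0,1)\}\), then \(P_\lambda\) is one of \(\mathbf1\), \(L(-2)\mathbf1\), \(L(-4)L(-2)\mathbf1\), and there is nothing to prove.
\item Otherwise, Theorem~\ref{thm:A1-PBW-triangular} gives \(P_\lambda\equiv_{A_1}\ell_\lambda\) with \(\ell_\lambda\in\mathcal P_{<P_\lambda}\). This \(\ell_\lambda\) is a finite combination of \(P_\mu\) with \(\nu(P_\mu)<\nu(P_\lambda)\), and by the induction hypothesis each such \([P_\mu]_1\) lies in \(\mathcal B\). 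Hence \([P_\lambda]_1\in\mathcal B\).
\end{itemize}

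Combining this with the first stage gives the lemma. I expect no real obstacle beyond stating well-foundedness cleanly: the substantive work is already done in Theorem~\ref{thm:A1-PBW-triangular} and Lemma~\ref{lem:A1-normalization-map}.
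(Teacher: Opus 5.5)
Your proposal is correct and follows the paper's proof essentially step for step. You reduce each PBW monomial to the family \(P_\lambda\) via Lemma~\ref{lem:A1-normalization-map}, then run strong induction along \(\nu\), taking the three retained monomials as base cases and using Theorem~\ref{thm:A1-PBW-triangular} for the inductive step; well-foundedness rests on the same observation as in the paper, that each fixed first coordinate \(\lambda_3+\lambda_4\) admits only finitely many \(P_\lambda\).
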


\begin{proof}
The map \(\pi_q\colon\bar V\to V_q\) is the natural quotient map and is
therefore surjective. Moreover,
the vacuum PBW monomials form a basis of \(\bar V\). It therefore
suffices to consider their images. Let \(Q\in\bar V\) be a vacuum PBW
monomial. By Lemma~\ref{lem:A1-normalization-map},
\(\pi_q(Q)\equiv_{A_1}\pi_q(\mathcal N_1(Q))\), and
\(\mathcal N_1(Q)\in\mathcal P_1=
\operatorname{span}_F\{P_\lambda:\lambda\in\Lambda_1\}\).
Thus the class of every vacuum PBW monomial is a linear combination of
the monomials \(P_\lambda\).

We now use strong induction on \(P_\lambda\) with respect to the
lexicographic order determined by \(\nu\). Its first coordinate
\(\lambda_3+\lambda_4\) is a nonnegative integer, and only finitely many
\(P_\lambda\) have a fixed first coordinate. Hence the order is well
founded. By definition, the three retained monomials
\(P_{(0,0,0)}\), \(P_{(0,0,1)}\), and \(P_{(1,0,1)}\)
belong to \(\mathcal B_1\). Suppose that every monomial \(P_\mu\)
strictly lower than a fixed nonretained monomial \(P_\lambda\) is
congruent in \(A_1(V_q)\)
to an element of \(\mathcal B_1\). By
Theorem~\ref{thm:A1-PBW-triangular}, there exists
\(\ell_\lambda\in\mathcal P_{<P_\lambda}\) such that
\(P_\lambda\equiv_{A_1}\ell_\lambda\).
Every term of \(\ell_\lambda\) is strictly lower than \(P_\lambda\).
The induction hypothesis therefore gives an element
\(b_\lambda\in\mathcal B_1\) such that
\(P_\lambda\equiv_{A_1}b_\lambda\).
Consequently, every monomial \(P_\lambda\) is congruent in \(A_1(V_q)\)
to an element of \(\mathcal B_1\), and the class of every vacuum PBW
monomial belongs to the stated three-dimensional space.
\end{proof}

The spanning result gives the required vector-space upper bound. To
express the retained PBW representatives in terms of
\(x_q=[\omega]_1\), we now compute the product \(x_q*_1x_q\).

\begin{proposition}
\label{prop:A1-upper}
Let \(x_q=[\omega]_1\in A_1(V_q)\). Then
\(A_1(V_q)=\operatorname{span}_F\{1,x_q,x_q^2\}\), where
\(x_q^2=x_q*_1x_q\).
The natural quotient map \(V_q\to V\) induces a surjective algebra
homomorphism \(A_1(V_q)\to A_1(V)\), and hence
\(A_1(V)=\operatorname{span}_F\{1,x,x^2\}\). In particular,
\(\dim_F A_1(V_q)\le3\) and \(\dim_F A_1(V)\le3\).
\end{proposition}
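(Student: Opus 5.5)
By Lemma~\ref{lem:A1-global-pbw-reduction}, the classes \([\mathbf1]_1\), \([L(-2)\mathbf1]_1=x_q\) and \([L(-4)L(-2)\mathbf1]_1\) already span \(A_1(V_q)\). It therefore suffices to show that \(x_q*_1x_q\) equals a nonzero multiple of \([L(-4)L(-2)\mathbf1]_1\) plus an element of \(\operatorname{span}_F\{[\mathbf1]_1,x_q\}\). Triangularity then gives \(A_1(V_q)=\operatorname{span}_F\{1,x_q,x_q^2\}\).

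To compute the product, I would expand the \(n=1\) definition with \(u=v=\omega\) and \(\operatorname{wt}(\omega)=2\):
\[
\omega*_1\omega=\operatorname{Res}_zY(\omega,z)\omega\Bigl(\frac{(1+z)^3}{z^2}-2\frac{(1+z)^3}{z^3}\Bigr).
\]
Here \(\binom{1}{1}=1\) and \(\binom{2}{1}=2\). Extracting coefficients, the first residue gives \(\sum_{j}\binom3j\omega_{j-2}\omega\) and the second gives \(\sum_j\binom3j\omega_{j-3}\omega\). With \(\omega_k=L(k-1)\), we have \(\omega_k\omega=0\) for \(k\ge4\), \(\omega_3\omega=\frac c2\mathbf1=2\mathbf1\), \(\omega_1\omega=2\omega\) and \(\omega_0\omega=L(-1)\omega\). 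So \(\omega*_1\omega\) is a combination of
\[
L(-4)\omega,\quad L(-3)\omega,\quad L(-2)\omega=L(-2)^2\mathbf1,\quad L(-1)\omega,\quad \omega,\quad \mathbf1.
\]
I would then reduce each term modulo \(O_1(V_q)\). First, \(L(-1)\omega\equiv-2\omega\) by the translation relation. Second, \(L(-2)^2\mathbf1=2L(-4)\mathbf1\) in \(V_q\). Third, the translation relation applied to \(\omega\) and to \(L(-3)\mathbf1\) expresses \(L(-3)\mathbf1\) and \(L(-4)\mathbf1\) through \(\omega\). Finally, \(L(-3)\omega=L(-3)L(-2)\mathbf1\) and \(L(-4)\omega=L(-4)L(-2)\mathbf1\). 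The first of these is reduced using the third case of Lemma~\ref{lem:A1-translation-triangular} with \(\lambda=(0,1,1)\), whose lower terms lie in the span of \(\mathbf1\), \(L(-2)\mathbf1\), \(L(-4)\mathbf1\), \(L(-3)\mathbf1\) and \(L(-4)L(-2)\mathbf1\). Here I would make the relation explicit rather than merely triangular, since it may itself contribute to the \(L(-4)L(-2)\mathbf1\) coefficient. This is done by computing \((L(-1)+L(0))L(-2)^2\mathbf1\) directly, together with \(L(-5)\mathbf1\equiv-3L(-4)\mathbf1-3L(-3)\mathbf1-L(-2)\mathbf1\) from the \(n=1\) case of Lemma~\ref{lem:An-omega-high-index}.

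The main obstacle is confirming that the total coefficient of \([L(-4)L(-2)\mathbf1]_1\) is nonzero in characteristic \(7\); I expect the translation relations for \(\lambda=(0,1,1)\) to affect this coefficient. The coefficient \(\binom{3}{j}\) on \(L(-4)\omega\) is \(1\) from the \(z^{-2}\) residue (where \(j=0\) corresponds to \(\omega_{-2}\)), and \(L(-4)\omega\) also appears in the \(z^{-3}\) residue with coefficient \(-2\binom31=-6\). I would track all contributions mod \(7\) carefully.

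If the coefficient unexpectedly vanished, a fallback is to avoid computing it. Instead, use the lower-bound modules: \(x_q\) maps under \(A_1(V_q)\to A_1(V)\to F^3\) to \((0,4,5)\), whose powers \(1,x,x^2\) are linearly independent (Vandermonde with distinct \(0,4,5\)). Hence \(\operatorname{span}\{1,x_q,x_q^2\}\) has dimension \(3\), which equals the upper bound \(\dim A_1(V_q)\le3\) from Lemma~\ref{lem:A1-global-pbw-reduction}. This already yields the equality of spans, using the algebra maps of Theorem~\ref{thm:modular-An-well-defined} and Lemma~\ref{lem:low-layers}. I would probably present this cleaner route if it is acceptable to invoke the three modules here.

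The remaining assertions follow from the functoriality noted in the preliminaries. \(\pi_V\) induces a surjective unital algebra homomorphism \(A_1(\pi_V)\) sending \(x_q\mapsto x\) and \(x_q*_1x_q\mapsto x*_1x\). Hence \(A_1(V)=\operatorname{span}_F\{1,x,x^2\}\), and both dimensions are at most \(3\).
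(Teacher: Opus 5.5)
Your proof is correct, but the version you say you would present takes a different route from the paper. The paper stays inside \(A_1(V_q)\). It expands
\[
\omega*_1\omega=2L(-2)\mathbf1+L(-3)\mathbf1-3L(-2)^2\mathbf1+2L(-3)L(-2)\mathbf1-2L(-4)L(-2)\mathbf1,
\]
reduces \(L(-3)\mathbf1\equiv-2L(-2)\mathbf1\), \(L(-2)^2\mathbf1\equiv-L(-2)\mathbf1\), \(L(-5)\mathbf1\equiv3L(-2)\mathbf1\) and \(L(-3)L(-2)\mathbf1\equiv4L(-2)\mathbf1\), and obtains \(x_q*_1x_q=-3x_q-2[L(-4)L(-2)\mathbf1]_1\).

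Your fallback instead imports the lower-bound data. The map \(\rho_1\circ A_1(\pi_V)\colon A_1(V_q)\to F^3\) sends \(1,x_q,x_q^2\) to the Vandermonde rows at \(0,4,5\). Hence these classes are linearly independent and fill the space of dimension at most \(3\) given by Lemma~\ref{lem:A1-global-pbw-reduction}. This is not circular, because Proposition~\ref{prop:A1-lower} rests only on Theorem~\ref{thm:modular-An-well-defined}, Theorem~\ref{thm:fermionic-realization} and Lemma~\ref{lem:low-layers}.

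Your route is shorter and already yields \(\dim_F A_1(V_q)=3\). The paper's route keeps the upper bound free of any module input, as its overall strategy intends. It also records the explicit change of basis between \([L(-4)L(-2)\mathbf1]_1\) and \(x_q^2\).

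The direct route you sketched also works, but your bookkeeping is off. In \(\sum_j\binom3j\omega_{j-2}\omega\) the \(j=0\) term is \(\omega_{-2}\omega=L(-3)\omega\), so the numbers \(1\) and \(-6\) are the two contributions to \(L(-3)\omega\). The vector \(L(-4)\omega=\omega_{-3}\omega\) occurs only in the \(z^{-3}\) residue, with coefficient \(-2\binom30=-2\).

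Nor can the \(\lambda=(0,1,1)\) relation feed into this coefficient. The identity \((L(-1)+L(0))L(-2)^2\mathbf1=2L(-3)L(-2)\mathbf1+L(-5)\mathbf1+4L(-2)^2\mathbf1\) contains no \(L(-4)L(-2)\mathbf1\). Consistently with this, \(\nu(P_{(1,0,1)})=(1,1,0)>(1,1,-1)=\nu(P_{(0,1,1)})\), so \(P_{(1,0,1)}\) is not among the lower terms of \(P_{(0,1,1)}\), contrary to what you wrote. The coefficient is therefore exactly \(-2\ne0\), and no fallback is needed.
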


\begin{proof}
By Lemma~\ref{lem:A1-global-pbw-reduction}, the vector space \(A_1(V_q)\)
is spanned by \([\mathbf1]_1\), \([L(-2)\mathbf1]_1\), and
\([L(-4)L(-2)\mathbf1]_1\).
The first two elements are \(1\) and \(x_q\), respectively. We calculate
the PBW coordinates of \(x_q^2=x_q*_1x_q\).

Using the product defining \(A_n(V)\) and
\(\operatorname{wt}(\omega)=2\), for \(n=1\) we have
\[
\omega*_1\omega
=
\sum_{m=0}^{1}(-1)^m\binom{m+1}{1}
\operatorname{Res}_zY(\omega,z)\omega
\frac{(1+z)^3}{z^{m+2}}.
\]
Expanding the residue by
\(Y(\omega,z)=\sum_{n\in\mathbb Z}L(n)z^{-n-2}\) gives the PBW
expression
\[
\omega*_1\omega
=
2L(-2)\mathbf1+L(-3)\mathbf1-3L(-2)^2\mathbf1
+2L(-3)L(-2)\mathbf1-2L(-4)L(-2)\mathbf1.
\]

We simplify the low-weight terms in \(A_1(V_q)\); all congruences below
are modulo \(O_1(V_q)\). Applying the translation relation to \(\omega\)
gives \(L(-3)\mathbf1+2L(-2)\mathbf1\equiv0\).
The relation
\(q=(L(-2)^2-2L(-4))\mathbf1=0\) and the translation relation applied
to \(L(-3)\mathbf1\) give, respectively,
\(L(-2)^2\mathbf1\equiv2L(-4)\mathbf1\) and
\(2L(-4)\mathbf1+3L(-3)\mathbf1\equiv0\). It follows that
\(L(-2)^2\mathbf1\equiv-L(-2)\mathbf1\).

Next apply the translation relation to \(L(-2)^2\mathbf1\) and put the
result into PBW normal form. This gives
\(2L(-3)L(-2)\mathbf1+L(-5)\mathbf1
+4L(-2)^2\mathbf1\equiv0\).
The \(n=1\) case of Lemma~\ref{lem:An-omega-high-index}, applied to
\(u=\mathbf1\), also gives
\(L(-5)\mathbf1+3L(-4)\mathbf1+3L(-3)\mathbf1
+L(-2)\mathbf1\equiv0\).
Thus \(L(-5)\mathbf1\equiv3L(-2)\mathbf1\), and substitution into the
preceding relation yields
\(L(-3)L(-2)\mathbf1\equiv4L(-2)\mathbf1\).
Substituting these identities into the expansion of
\(\omega*_1\omega\), we obtain
\[
x_q*_1x_q
=-3[L(-2)\mathbf1]_1-2[L(-4)L(-2)\mathbf1]_1.
\]
Here \(x_q^2\) denotes the algebra product \(x_q*_1x_q\) displayed above.
Since \(-2\in F_7^\times\), the third PBW representative
\([L(-4)L(-2)\mathbf1]_1\) is an \(F\)-linear combination of \(x_q\) and
\(x_q^2\). Hence
\(A_1(V_q)=\operatorname{span}_F\{1,x_q,x_q^2\}\).

By the functoriality under quotient maps noted above, the natural
quotient map \(\pi_V\colon V_q\to V\) induces a surjective algebra
homomorphism \(A_1(V_q)\to A_1(V)\). The image of \(x_q\) is \(x\), and
therefore \(A_1(V)=\operatorname{span}_F\{1,x,x^2\}\).
\end{proof}

To show that the upper bound is sharp, we use the zero-mode actions on
the three one-dimensional homogeneous subspaces identified above.

\begin{proposition}
\label{prop:A1-lower}
The following algebra homomorphism is surjective:
\[
\rho_1:A_1(V)\to
\operatorname{End}_F(\mathcal L_0(0))
\oplus \operatorname{End}_F(\mathcal L_4(0))
\oplus \operatorname{End}_F(\mathcal L_4(1))
\cong F^3.
\]
In particular, \(\dim_F A_1(V)\ge3\).
\end{proposition}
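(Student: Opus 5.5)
The map \(\rho_1\) is the sum of the three module structures supplied by Theorem~\ref{thm:modular-An-well-defined}, so the proof has three steps: check that \(\rho_1\) is well defined, compute the images of \(1\) and \(x\), and conclude by a Vandermonde argument.

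\textbf{Well-definedness.} By Lemma~\ref{lem:low-layers}, \(\mathcal L_0(0)\), \(\mathcal L_4(0)\) and \(\mathcal L_4(1)\) are admissible homogeneous subspaces of degree at most \(1\). Theorem~\ref{thm:modular-An-well-defined} with \(n=1\) then makes each of them an \(A_1(V)\)-module under \([a]_1\mapsto o(a)\). Here \(\mathcal L_0=V\) is taken with its natural grading, and \(\mathcal L_4\) is an admissible \(V\)-module by Theorem~\ref{thm:fermionic-realization}. Each of these spaces is one-dimensional, so each action is an algebra homomorphism \(A_1(V)\to\operatorname{End}_F(\cdot)\cong F\) sending \(1\mapsto1\). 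Taking the direct sum gives the algebra homomorphism \(\rho_1\) into \(F^3\).

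\textbf{Image of \(x\).} Since \(o(\omega)=L(0)\), the element \(x=[\omega]_1\) acts on each space as the scalar \(L(0)\)-eigenvalue. On \(\mathcal L_0(0)=F\mathbf1\) this is \(0\). On \(\mathcal L_4(0)=Fv_4\) it is the highest weight \(4\). On \(\mathcal L_4(1)=FL(-1)v_4\) it is \(4+1=5\), using \([L(0),L(-1)]=L(-1)\). Hence \(\rho_1(1)=(1,1,1)\), \(\rho_1(x)=(0,4,5)\) and \(\rho_1(x^2)=(0,16,25)=(0,2,4)\) in \(F\). I will double-check that \(v_4\) really has \(L(0)\)-eigenvalue \(4\). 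For the free fermion, \(a(-\tfrac12)\mathbf1\) has conformal weight \(\tfrac12=4\) in \(F\); equivalently, Theorem~\ref{thm:fermionic-realization} identifies \(\mathcal L_4\) as the module of highest weight \(4\).

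\textbf{Surjectivity.} The image contains \((1,1,1)\), \((0,4,5)\) and \((0,2,4)\). The matrix with these rows is a Vandermonde matrix in the nodes \(0,4,5\). Its determinant is \((4-0)(5-0)(5-4)=20=6\neq0\) in \(F\), since the nodes are distinct in characteristic \(7\). So these three vectors span \(F^3\) and \(\rho_1\) is surjective, which gives \(\dim_FA_1(V)\ge3\). Equivalently, \(\rho_1(x)\) generates \(F^3\) as an algebra, because its coordinates are pairwise distinct; one can then take the Lagrange idempotents as explicit polynomials in \(x\).

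The main obstacle is only bookkeeping. One must make sure the shifted admissible grading on \(\mathcal L_4\) satisfies the hypotheses of Theorem~\ref{thm:modular-An-well-defined}, so that \(\mathcal L_4(1)\), and not merely \(\Omega_1(\mathcal L_4)\), is an \(A_1(V)\)-module. One must also make sure the zero mode \(o(\omega)\) is exactly \(L(0)\), not \(L(0)\) shifted by the grading. Both follow directly from the stated definitions, so no real difficulty is expected.
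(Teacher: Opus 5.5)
Your proposal is correct and follows the paper's proof. You get well-definedness from Theorem~\ref{thm:modular-An-well-defined} and Lemma~\ref{lem:low-layers}, compute \(\rho_1(x)=(0,4,5)\), and conclude surjectivity because these eigenvalues are pairwise distinct in characteristic \(7\); your explicit Vandermonde check using \(\rho_1(x^2)=(0,2,4)\) and determinant \(20=6\neq0\) is simply a concrete form of the paper's remark that evaluation \(F[T]\to F^3\) at \(0,4,5\) is surjective.
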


\begin{proof}
By Theorem~\ref{thm:modular-An-well-defined},
\(\mathcal L_0(0)\), \(\mathcal L_4(0)\), and \(\mathcal L_4(1)\) are
\(A_1(V)\)-modules under the zero-mode action. By
Lemma~\ref{lem:low-layers}, all three spaces are one-dimensional.
Therefore, for \(\xi\in V\),
\[
\rho_1([\xi]_1)=
\bigl(o(\xi)|_{\mathcal L_0(0)},
o(\xi)|_{\mathcal L_4(0)},o(\xi)|_{\mathcal L_4(1)}\bigr)
\]
defines a well-defined algebra homomorphism
\(\rho_1:A_1(V)\to F^3\).

Since \(o(\omega)=L(0)\), it acts on \(\mathcal L_h(m)\) by the scalar
\(h+m\). Thus, for \(x=[\omega]_1\), one has
\(\rho_1(x)=(0,4,5)\).
These three elements are pairwise distinct in the field \(F\) of
characteristic \(7\), so the map
\(f(T)\longmapsto(f(0),f(4),f(5))\)
defines a surjective algebra homomorphism from \(F[T]\) to \(F^3\). The image under
\(\rho_1\) of the subalgebra generated by \(x\) is therefore \(F^3\).
Hence \(\rho_1(A_1(V))=F^3\), and
\(\dim_F A_1(V)\ge3\).
\end{proof}

We now assemble the preceding calculations to obtain the main result.

\begin{theorem}
\label{thm:main}
Let \(F\) be an algebraically closed field of characteristic \(7\), let
\(\bar V=\bar V(\frac12,0)_F\), and set
\(q=(L(-2)^2-2L(-4))\mathbf1\).
Let \(I(q)\) be the graded ideal generated by \(q\), set
\(V_q=\bar V/I(q)\), and let \(V=L(\frac12,0)_F\) be the simple quotient
of \(\bar V\). Then the natural quotient map
\(\pi_V:V_q\to V\) induces an algebra isomorphism
\(A_1(\pi_V):A_1(V_q)\to A_1(V)\), and
\[
A_1(V_q)\cong A_1(V)
\cong F[T]/(T(T-4)(T-5))\cong F^3.
\]
In particular, both \(A_1(V_q)\) and \(A_1(V)\) are
three-dimensional semisimple commutative algebras.
\end{theorem}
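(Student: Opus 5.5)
The plan is to combine the upper bound of Proposition~\ref{prop:A1-upper} with the lower bound of Proposition~\ref{prop:A1-lower} through the composite map
\[
\rho_1\circ A_1(\pi_V):A_1(V_q)\to A_1(V)\to F^3 .
\]
By the functoriality of \(A_n\) under quotient maps, \(A_1(\pi_V)\) is a surjective unital algebra homomorphism sending \(x_q\) to \(x\). By Proposition~\ref{prop:A1-lower}, \(\rho_1\) is surjective onto \(F^3\). Hence the composite is a surjection from a space of dimension at most \(3\) (Proposition~\ref{prop:A1-upper}) onto a \(3\)-dimensional space. It is therefore a linear isomorphism, so \(\dim A_1(V_q)=3\). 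Since the composite is bijective, its first factor \(A_1(\pi_V)\) is injective; being also surjective, it is an algebra isomorphism. Then \(\dim A_1(V)=3\) as well, and \(\rho_1\) is an isomorphism \(A_1(V)\cong F^3\).

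Next I would identify the polynomial presentation. Define \(\phi:F[T]\to A_1(V)\) by \(T\mapsto x\). Since \(A_1(V)=\operatorname{span}\{1,x,x^2\}\) by Proposition~\ref{prop:A1-upper}, \(\phi\) is surjective.

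To compute the kernel, compose \(\phi\) with \(\rho_1\). This gives \(f\mapsto(f(0),f(4),f(5))\), using \(\rho_1(x)=(0,4,5)\) and the fact that \(\rho_1\) is an algebra homomorphism. Because \(\rho_1\) is injective, \(\ker\phi\) is the set of polynomials vanishing at \(0,4,5\). These are distinct in characteristic \(7\), so \(\ker\phi=(T(T-4)(T-5))\). Thus
\[
A_1(V)\cong F[T]/(T(T-4)(T-5)),
\]
and the Chinese remainder theorem identifies this with \(F^3\). Transporting along \(A_1(\pi_V)\) gives the same description for \(A_1(V_q)\), with \(x_q\) corresponding to \(T\). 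Commutativity and semisimplicity are immediate from the isomorphism with \(F^3\).

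No step here is a real obstacle, since the heavy lifting is already in Lemma~\ref{lem:A1-global-pbw-reduction} and Lemma~\ref{lem:A1-pure-L4-leading}. The only points needing care are the following.
\begin{itemize}
\item \(\rho_1\) must be an algebra homomorphism, so that \(\rho_1(x^k)=\rho_1(x)^k\) with respect to \(*_1\). This follows from Theorem~\ref{thm:modular-An-well-defined}, because each homogeneous subspace is an \(A_1(V)\)-module.
\item The dimension count must be done on the composite map before concluding that \(A_1(\pi_V)\) is injective.
\end{itemize}
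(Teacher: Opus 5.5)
Your proposal is correct and follows the paper's proof essentially verbatim. The paper also plays the surjection \(A_1(\pi_V)\) and the bound \(\dim_F A_1(V_q)\le 3\) against the surjection \(\rho_1\), phrased as the chain \(3\le\dim_F A_1(V)\le\dim_F A_1(V_q)\le 3\) rather than via your composite \(\rho_1\circ A_1(\pi_V)\); it then computes \(\ker\phi\) by observing that \(\rho_1\circ\phi\) is evaluation at \(0,4,5\), and finishes with the Chinese remainder theorem, so the two dimension counts are equivalent.
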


\begin{proof}
Propositions~\ref{prop:A1-upper} and~\ref{prop:A1-lower} give
\[
3\le\dim_F A_1(V)\le\dim_F A_1(V_q)\le3.
\]
Thus both algebras are three-dimensional, and the surjective
homomorphism \(A_1(\pi_V):A_1(V_q)\to A_1(V)\)
established in Proposition~\ref{prop:A1-upper} is an isomorphism. The
surjective algebra homomorphism \(\rho_1:A_1(V)\to F^3\) in
Proposition~\ref{prop:A1-lower} is also an isomorphism, since its source
and target have the same dimension.

Define \(\phi:F[T]\to A_1(V)\) by \(T\mapsto x\).
Proposition~\ref{prop:A1-upper} gives
\(A_1(V)=\operatorname{span}_F\{1,x,x^2\}\), so \(\phi\) is surjective.
The composite \(\rho_1\circ\phi:F[T]\to F^3\) is evaluation
at the three points \(0,4,5\), and hence
\(\ker(\rho_1\circ\phi)=(T(T-4)(T-5))\).
Since \(\rho_1\) is an isomorphism,
\(\ker\phi=\ker(\rho_1\circ\phi)\). It follows that
\(A_1(V)\cong F[T]/(T(T-4)(T-5))\).
The polynomial \(T(T-4)(T-5)\) has three pairwise distinct roots in
\(F\). The Chinese remainder theorem therefore gives
\(F[T]/(T(T-4)(T-5))\cong F^3\).
\end{proof}

\section*{Declaration of generative AI use}

During the preparation of this manuscript, the author used OpenAI Codex
for English translation, language editing, and manuscript formatting.
The author reviewed and edited all generated content and takes full
responsibility for the content of the article.

\end{document}